\DeclareMathOperator{\Emb}{Emb}
\DeclareMathOperator{\Iso}{Iso}
\DeclareMathOperator{\mesh}{mesh}
\DeclareMathOperator{\comp}{comp}
\DeclareMathOperator{\D}{D_H}
\DeclareMathOperator{\DGH}{D_{\tilde H}}
\DeclareMathOperator{\DAM}{d_{GH^0}}
\theoremstyle{plain}
\newtheorem{thm}{Theorem}[section]
\newtheorem{cor}[thm]{Corollary}
\newtheorem{lem}[thm]{Lemma}
\newtheorem{prop}[thm]{Proposition}
\theoremstyle{definition}
\newtheorem{rmk}[thm]{Remark}
\theoremstyle{remark}
\DeclareMathOperator{\diam}{diam}
\DeclareMathOperator{\dist}{dist}
\newcommand{\DS}{\mathcal{DS}}
\newcommand{\Ur}{\mathbb U}
\newcommand{\U}{\mathcal U}
\newcommand{\G}{\mathcal G}
\newcommand{\F}{\mathbb F}
\newcommand{\V}{\mathbb V}
\newcommand{\R}{\mathbb R}
\newcommand{\K}{\mathcal K}
\newcommand{\Z}{\mathbb Z}
\newcommand{\N}{\mathbb N}
\renewcommand{\epsilon}{\varepsilon}
\begin{document}

\author{Alfonso Artigue\footnote{Email: artigue@unorte.edu.uy. Adress: Departamento de Matemática y Estadística del Litoral, Universidad de la Rep\'ublica, Gral. Rivera 1350, Salto, Uruguay.}}
\title{Generic Dynamics on Compact Metric Spaces}

\date{\today}
\maketitle

\begin{abstract}
We prove that generically and
modulo a topological conjugacy there is only one 
dynamical system.
\end{abstract}

\section{Introduction}

In this article we will study generic dynamical systems. 
A property in a complete metric space is \emph{generic} 
if the set of the points satisfying this property is residual.
Let us explain the space of dynamical systems that we will consider.

To define a dynamical system we need a phase space. 
For this purpose we will consider compact metric spaces. A key point of the paper is that no phase space is fixed. 
Instead, 
given a complete metric space $(\V,\dist)$ we consider all the compact subsets of $\V$. 
Considering the results in this paper, we suggest to think that $\V$ is Euclidean $\R^n$ or Urysohn universal space 
$\Ur$ (that is defined in \S\ref{secSS}).
We remark that $\Ur$ contains an isometric copy of each compact metric space.
The idea is that, when performing a perturbation of a dynamical system, 
we are allowed to perturb its domain too.

When a compact subset $M\subset\V$ is fixed, a \emph{dynamical system} is a homeomorphism of this set,
a continuous surjective map or a set-valued map. 
A function $f\colon M\to M$ will be identified with its graph $\{(x,f(x)):x\in M\}$. 
If $M$ is compact and $f$ is continuous then 
its graph is a compact subset of $M\times M$.
Thus, an arbitrary compact subset of $M\times M$ can be regarded as 
a \emph{generalized continuous function}. In this way we arrive to set valued maps. 
If $f\subset M\times M$ is compact, then we define $f(x)=\{y\in M:(x,y)\in f\}$. 
Therefore, in this paper, a \emph{dynamical system} is a compact subset of $M\times M$, for some compact subset $M\subset \V$. 
There is a technical detail, we assume that $f\subset M\times M$ and its inverse are onto. 
The inverse of $f$ is $f^{-1}=\{(y,x)\in M\times M:(x,y)\in f\}$ 
and $f$ is \emph{onto} if for all $y\in M$ there is $x\in M$ such that $(x,y)\in f$. 
The set of dynamical systems, that depends on $\V$, will be denoted as $\DS(\V)$.

In order to define and study generic properties on $\DS(\V)$, we need a topology for this set. 
Given that we defined dynamical systems as 
certain compact subsets of $\V\times \V$, 
it is natural to consider the Hausdorff metric. It will be denoted as $\D$, and in this way 
we obtain the metric space of dynamical systems 
$$(\DS(\V),\D),$$ 
which is the object of study of the present paper.

To state our main result suppose that $(\V,\dist)$ is Polish and perfect (as for example $\R^n$ and $\Ur$). 
We remark that every surjective map, in particular every homeomorphism, 
of a compact metric space has an isometric conjugate in $(\DS(\Ur),\D)$ (where $\Ur$ is the Urysohn universal space).
In Theorem \ref{thmGenDSSH} we will show that generically and
modulo a topological conjugacy there is only one dynamical system in $\DS(\V)$. 
In other words, we show that there is a single homeomorphism in $\DS(\V)$ whose conjugacy class is a dense $G_\delta$ subset of $\DS(\V)$.
This dynamical system is a homeomorphism of a Cantor space, known as the \emph{Special Homeomorphism} \cite{AGW}.

From the following viewpoint, this result is quite natural. 
On one hand: 

\begin{center}
 \emph{a generic compact metric space is a Cantor space.} 
\end{center}
This is true if, for instance, we consider the space of compact subsets of a complete and perfect metric space (see \cite{Wie}*{the paragraph below Lemma 1}). 
It is also true with respect to the Gromov-Hausdorff metric \cite{Rouyer}*{Corollary 5}.
On the other hand: 
\begin{center}
 \emph{on a fixed Cantor space, a generic homeomorphism is conjugate to the Special Homeomorphism.}
\end{center}
This result was first proved by Kechris and Rosendal \cite{KeRo}. 
In \cite{AGW}, Akin, Glasner and Weiss gave a concrete construction.
Thus, on a generic compact metric space (= \emph{a Cantor space}) a generic homeomorphism is conjugate to the Special Homeomorphism. 
What we prove in Theorem \ref{thmGenDSSH} is: 
\begin{center}
 \emph{a generic (homeomorphism of a compact metric space) is conjugate to the Special Homeomorphism.}
\end{center}
The key of our approach is the space $(\DS(\V),\D)$ which allows us to measure the distance between 
dynamical systems defined on different spaces.

A sketch of the proof of Theorem \ref{thmGenDSSH} is as follows. First, we show in Proposition \ref{propGenCantorHomeo} 
that a generic $f\in\DS(\V)$ is a homeomorphism of a Cantor space. 
From this result it follows the density of the conjugacy class of the Special Homeomorphism in $\DS(\V)$. 
To prove the genericity we consider a map 
$$\varphi\colon \Emb(K,\V)\times H(K)\to\DS_{CH}(\V)$$ 
defined as $\varphi(h,f)=h f h^{-1}$ (a homeomorphism of $h(K)$). 
There, $K$ is a fixed Cantor space, $\Emb(K,\V)$ is the space of continuous injective maps from $K$ to $\V$, $H(K)$ is the space of homeomorphisms of $K$ and 
$\DS_{CH}(\V)\subset\DS(\V)$ is the set of homeomorphisms of Cantor spaces. 
If $\G$ denotes the set of homeomorphisms $f\in H(K)$ conjugate to the Special Homeomorphism, 
we will show that $\varphi(\Emb(K,\V)\times \G)$ is a dense $G_\delta$ subset of $\DS_{CH}(\V)$, which easily finishes the proof.

Some dynamical properties of the Special Homeomorphism are known, for instance,
it has vanishing topological entropy \cite{GlWe}, the pseudo-orbit tracing property \cite{BeDa} 
and that it is not conjugate to a subshift \cite{Sears}. 
The reader may see \cite{Akin2016} for a survey and more results on dynamical systems on Cantor spaces.
In Corollary \ref{corMani} we give 
an application of Theorem \ref{thmGenDSSH} to the dynamics of homeomorphisms of compact manifolds.

In \cite{Ho}, Hochman considered another extension of the genericity of the Special Homeomorphism. 
In this paper it is proved, among other results, the genericity of the Special Homeomorphism 
in the space of subshifts of the Hilbert cube.  
This space is \emph{topologically universal} for dynamical systems, 
i.e., it contains a topological conjugate of every dynamical system 
of a compact metric space (see \cite{Ho}*{\S 2.5}). 
We say that a homeomorphism 
$\varphi\colon X\to Y$ is a topological conjugacy (or simply, conjugacy) between 
two maps $f\colon X\to X$ and $g\colon Y\to Y$ if $g \varphi=\varphi  f$. 
If, in addition, $\varphi$ is an isometry we say that it is an isometric conjugacy.
The \emph{isometric universality} of 
the space $\DS(\Ur)$ 
is the key for our next result. 

The second result that we obtained, Theorem \ref{thmGHGen}, 
is the projection of Theorem \ref{thmGenDSSH} 
onto the space of dynamical systems modulo isometric conjugacies.
If $\K(\V)$ denotes the set of compact subsets of $\V$ then 
it is natural to consider the quotient space 
$$\K_{\Iso}(\V)=\K(\V)/\sim$$
where $\sim$ is the equivalence relation of isometry (between compact subsets of $\V$).
For $\V=\Ur$ we obtain the so called \emph{Gromov-Hausdorff space}.
In a recent paper, Arbieto and Morales \cite{AM} extended this idea to dynamical systems. 
If in $\DS(\V)$ we consider the equivalence relation $\sim$ of isometric conjugacies (see \S\ref{secIsoConj}) we obtain 
the \emph{space of dynamical systems} 
\[
 \DS_{\Iso}(\V)=\DS(\V)/\sim.
\]
In \cite{AM} the authors studied the notion of topological stability, defining the topology of $\DS_{\Iso}(\V)$ 
 via a quasi-metric (see Remark \ref{rmkAMmetrica}). 
In this paper we give a natural metric for this space. 
It is defined in \S\ref{secQuotSD}
and the compatibility with Arbieto-Morales metric is proved in Theorem \ref{thmCompAm}. 
In Theorem \ref{thmGHGen}
we will show that if $(\V,\dist)$ is Polish, perfect and ultrahomogeneous 
then a generic $\tilde f\in \DS_{\Iso}(\V)$ is conjugate to the Special Homeomorphism. 
A space is \emph{ultrahomogeneous} if every isometry between compact subsets can be extended to a global isometry, 
see \S\ref{secSS}.

It could not be clear why we do not fix $\V=\Ur$. 
On one hand, since $\Ur$ is universal we are considering all the 
compact metric spaces. But, on the other hand, it is more general not to fix $\V$. 
Let us say more. During the preparation of this work we found that 
the key property (for our purposes) of $\Ur$ is the ultrahomogeneity, not the universality. 
Besides, allowing $\V=\R^n$ (ultrahomogeneous too) we are including a quite standard space. 
The Gromov-Hausdorff distance can be defined in $\R^n$ (allowing only isometries of $\R^n$). 
This space is studied, for example, by M\'emoli in \cite{Me08} where also some applications are given.

As we said, in the spaces of dynamical systems we will consider set-valued maps. 
It could be natural to consider only continuous maps or homeomorphisms, 
but this would imply that the metric $\D$ would not be complete (see Remark \ref{rmkNoCompleto}). 
In fact, we will show that the metric completion of the space of homeomorphisms is 
the space of set-valued dynamics. That is, 
$(\DS(\V),\D)$ is complete (Proposition \ref{propDSComplete})
and homeomorphisms are dense in $\DS(\V)$ (Proposition \ref{propExpHomDens}). 
These results assume that $(\V,\dist)$ is complete and perfect.

I thank Mauricio Achigar, Ignacio Monteverde and José Vieitez for useful conversations 
during the preparation of this work.
Also, I thank the referee for calling my attention to \cite{Akin2004}*{Theorem 1.2}, which allowed me to improve the proof 
of Theorem \ref{thmGenDSSH}. 

\section{Dynamical systems}
\label{secDSP}
In this section we give some preliminaries on metric spaces and 
the basic properties of $\DS(\V)$. 
In \S\ref{secSH} we prove Theorem \ref{thmGenDSSH}.

%

\subsection{Cantor spaces and the Hausdorff distance}
\label{secMS}

We say that a topological space is \emph{perfect} if it has no isolated points.
A metric space is a \emph{Cantor space} if it is compact, perfect and totally disconnected. 
\begin{prop}[\cite{Akin2004}*{Proposition 2.1}]
\label{propCantorChico} Any two Cantor spaces are homeomorphic. 
Every open subset of a complete and perfect space contains a Cantor space.
\end{prop}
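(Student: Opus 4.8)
The statement has two independent parts, so I would treat them separately, both being classical facts about Cantor spaces. The first is Brouwer's topological characterization, and the second is a Cantor-scheme construction inside an open set.

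For the first assertion, the plan is to prove the stronger fact that every Cantor space is homeomorphic to the standard model $\{0,1\}^{\N}$ with the product topology, from which any two Cantor spaces are homeomorphic by composition. Given a Cantor space $X$, I would build a refining sequence of finite clopen partitions. Total disconnectedness together with compactness gives a basis of clopen sets, so $X$ admits arbitrarily fine finite clopen covers, and disjointifying them yields finite clopen partitions. Because $X$ is perfect, every nonempty clopen set is infinite and hence splits into two nonempty clopen pieces, so I can arrange a binary tree of clopen sets $\{X_s : s \in \{0,1\}^{<\N}\}$ with $X_\emptyset = X$, each $X_s = X_{s0} \sqcup X_{s1}$ a disjoint union of nonempty clopen sets, and $\diam(X_s) \to 0$ as $|s| \to \infty$. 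Each $x \in X$ then determines a unique infinite branch, giving a map $X \to \{0,1\}^{\N}$; the shrinking diameters make it injective and continuous, surjectivity follows since every $X_s$ is nonempty, and it is a homeomorphism because $X$ is compact and $\{0,1\}^{\N}$ is Hausdorff.

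For the second assertion, let $U$ be a nonempty open subset of a complete and perfect space $(\V,\dist)$. The plan is to embed $\{0,1\}^{\N}$ into $U$ via a Cantor scheme of nested closed balls. I would construct closed balls $\{B_s : s \in \{0,1\}^{<\N}\}$ with $B_\emptyset \subset U$, with $B_{s0}, B_{s1} \subset B_s$ disjoint, and with $\diam(B_s) \le 2^{-|s|}$. The splitting step uses perfectness: since no point is isolated, every ball contains at least two distinct points, around which one can center two disjoint closed balls of sufficiently small radius lying inside $B_s$. For a branch $x \in \{0,1\}^{\N}$ the balls $B_{x|n}$ are nested with diameters tending to zero, so by completeness their intersection is a single point $g(x)$. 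The resulting map $g\colon \{0,1\}^{\N} \to \V$ is continuous and injective, the injectivity coming from the disjointness of siblings, hence a homeomorphism onto its image by compactness; that image is a Cantor space contained in $U$.

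The constructions themselves are routine, and the only points requiring care are the two uses of the hypotheses. In the first part, perfectness is what guarantees that every clopen piece can be subdivided, so the tree never terminates; in the second part, perfectness supplies two separable points at each node while completeness guarantees that each branch converges to an actual point of $\V$. I expect the main bookkeeping obstacle to be verifying that the induced maps are genuine homeomorphisms onto $\{0,1\}^{\N}$ and onto the constructed image, but this is standard, since a continuous bijection from a compact space to a Hausdorff space is automatically a homeomorphism.
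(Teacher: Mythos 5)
Your proposal is correct, but note that the paper contains no proof of this proposition to compare against: it is quoted verbatim from Akin \cite{Akin2004}*{Proposition 2.1} and used as a black box. What you have written is the standard classical argument — Brouwer's characterization of $\{0,1\}^{\N}$ for the first assertion, and a Cantor scheme of nested closed balls for the second — which is essentially how the result is established in the cited source, so your route is a legitimate self-contained replacement for the citation. The one place where your sketch hides genuine bookkeeping is the claim that one can ``arrange a binary tree of clopen sets with $\diam(X_s)\to 0$ as $|s|\to\infty$'': a fine clopen partition of a Cantor space need not have a power-of-two number of pieces, so a literal binary tree compatible with shrinking diameters requires an extra device. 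Two standard fixes work: either peel off one piece of the fine partition at a time (setting $X_{s0}$ to be a single small piece and $X_{s1}$ the union of the rest), in which case diameters shrink along every infinite branch rather than uniformly per level — which is all that injectivity needs; or use perfectness to split pieces further until every node has the same power-of-two number of children. With either fix, the rest of your verification (continuity from clopenness of the sets $X_s$, injectivity from shrinking diameters along branches, surjectivity from nonemptiness, and the compact-to-Hausdorff argument upgrading the bijection to a homeomorphism) is sound. The second half is likewise correct as written: perfectness supplies two distinct points inside each open ball around which disjoint small closed balls can be centered, completeness realizes each branch as a point, and the image is a Cantor space because compactness, perfectness and total disconnectedness are topological invariants of $\{0,1\}^{\N}$.
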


For an arbitrary metric space $(N,\dist)$ denote by $\K(N)$ the set of compact subsets of $N$.
The Hausdorff distance between $X,Y\in\K(N)$ 
is defined as
\begin{equation}
\label{ecuHausd}
 \dist_H(X,Y)=\inf\{\epsilon>0:X\subseteq B_\epsilon(Y)\text{ and }Y\subseteq B_\epsilon(X)\},
\end{equation}
where $B_\epsilon(x)=\{y\in N:\dist(x,y)<\epsilon\}$ and $B_\epsilon(X)=\cup_{x\in X} B_\epsilon(x)$.

\begin{prop}[\cites{IN,Akin2004}]
\label{propHyperCompleto}
If $(N,\dist)$ is complete (compact) then $(\K(N),\dist_H)$ is complete (compact).
\end{prop}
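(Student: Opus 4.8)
The plan is to treat the two assertions separately, deriving compactness from completeness.

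First, completeness. Suppose $(N,\dist)$ is complete and let $(X_n)$ be a Cauchy sequence in $(\K(N),\dist_H)$. Since a Cauchy sequence converges whenever one of its subsequences does, I would first pass to a subsequence satisfying $\dist_H(X_n,X_{n+1})<2^{-n}$. I would then propose as the limit the set
\[
X=\bigcap_{n\geq 1}\clos\Big(\bigcup_{m\geq n}X_m\Big),
\]
which is closed, being an intersection of closed sets. To see that $X$ is nonempty, I would start from any point of $X_1$ and, using $X_n\subseteq B_{2^{-n}}(X_{n+1})$, inductively build a sequence $x_n\in X_n$ with $\dist(x_n,x_{n+1})<2^{-n}$; this sequence is Cauchy, so completeness of $N$ gives a limit $x\in X$.

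The substantive part of the completeness argument is to verify that $X$ is compact and that $X_n\to X$. For compactness, since $X$ is a closed subset of the complete space $N$, it suffices to show $X$ is totally bounded: given $\epsilon>0$, I would fix $n$ with $\dist_H(X_m,X_n)<\epsilon/2$ for all $m\geq n$, take a finite $\epsilon/2$-net of the compact set $X_n$, and check that the same centers form an $\epsilon$-net for $X$. For the convergence $\dist_H(X_n,X)\to 0$ I would prove the two inclusions separately: that $X\subseteq B_\epsilon(X_n)$ for large $n$ follows directly from the definition of $X$, while $X_n\subseteq B_\epsilon(X)$ for large $n$ follows from the telescoping construction above, which shows every point of $X_n$ is approximated by a Cauchy chain whose limit lies in $X$.

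Second, compactness. If $N$ is compact it is in particular complete, so $(\K(N),\dist_H)$ is already complete by the previous part, and it remains to prove it is totally bounded. Here I would exploit that $N$ itself is totally bounded: given $\epsilon>0$, fix a finite $\epsilon$-net $F\subseteq N$. For each $X\in\K(N)$ associate the finite set $F_X=\{p\in F:B_\epsilon(p)\cap X\neq\emptyset\}$ and verify $\dist_H(X,F_X)\leq\epsilon$. Since $F$ has only finitely many subsets, the family of all such $F_X$ is a finite $\epsilon$-net for $\K(N)$, and a complete totally bounded space is compact.

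The main obstacle I anticipate is the compactness of the candidate limit in the completeness proof: one must show not only that $X$ is nonempty and closed, but that it inherits total boundedness uniformly from the tail of the Cauchy sequence, and then control both Hausdorff inclusions simultaneously. The compactness assertion, by contrast, reduces to a short total-boundedness count once completeness is in hand.
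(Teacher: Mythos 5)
Your proof is correct: the lim-sup candidate limit $X=\bigcap_{n}\clos\big(\bigcup_{m\geq n}X_m\big)$, the total-boundedness argument for its compactness, the two separate Hausdorff inclusions for convergence, and the finite-subsets-of-an-$\epsilon$-net count for total boundedness of $\K(N)$ are all sound (up to routine strict-versus-weak inequality bookkeeping). The paper itself gives no proof of this proposition --- it is quoted as a known result from \cite{IN} and \cite{Akin2004} --- and your argument is precisely the classical one found in those references, so there is no difference of approach to report.
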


Fix a Cantor space $K\subset \V$ and let $H(K)$ be the space of homeomorphisms of $K$. 
Suppose that $h\colon A\to B$ is a function with $A,B\subset \V$.
In this case we define 
$$\|h\|=\sup_{a\in A}\dist(a,h(a)).$$
Let $H(A,B)$ be the set of homeomorphisms $h\colon A\to B$.
For a family $\U$ of subsets of $\V$ let $\mesh(\U)=\sup\{\diam(U):U\in\U\}$.
The next result gives a particular way to calculate the Hausdorff distance between Cantor spaces.
\begin{prop}
\label{propCantorCerca}
 If $A,B\subset \V$ are Cantor spaces then 
 $$\dist_H(A,B)=\inf_{h\in H(A,B)}\|h\|.$$ 
\end{prop}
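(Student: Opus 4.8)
The plan is to prove the two inequalities separately. The inequality $\dist_H(A,B)\le \inf_{h\in H(A,B)}\|h\|$ is immediate: if $h\in H(A,B)$, then for every $a\in A$ the point $h(a)\in B$ satisfies $\dist(a,h(a))\le\|h\|$, so $A\subseteq B_{\|h\|+\epsilon}(B)$ for all $\epsilon>0$; since $h$ is onto $B$, applying the same to $h^{-1}$ gives $B\subseteq B_{\|h\|+\epsilon}(A)$. Hence $\dist_H(A,B)\le\|h\|$, and taking the infimum over $h$ gives the claim. The content is therefore the reverse inequality, for which it suffices to produce, for every $\delta>\dist_H(A,B)$, a homeomorphism $h\in H(A,B)$ with $\|h\|\le\delta$; letting $\delta\downarrow\dist_H(A,B)$ then finishes the proof.

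To build such an $h$ I would fix $\dist_H(A,B)<\delta_0<\delta$ and auxiliary sequences $\delta_n\uparrow\delta$ and $\eta_n\downarrow 0$, and construct \emph{nested} finite clopen partitions $\mathcal P_n$ of $A$ and $\mathcal Q_n$ of $B$ together with bijections $\phi_n\colon\mathcal P_n\to\mathcal Q_n$ compatible with the nesting, such that $\mesh(\mathcal P_n),\mesh(\mathcal Q_n)<\eta_n$ and every matched pair $P\leftrightarrow\phi_n(P)$ satisfies $\dist_H(P,\phi_n(P))<\delta_n$. Granting this, each $a\in A$ lies in a decreasing sequence of pieces $P_n(a)$ whose images $\phi_n(P_n(a))$ are decreasing nonempty compacta of vanishing diameter, so $h(a):=\bigcap_n\phi_n(P_n(a))$ is a well-defined point. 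The standard argument (using that both meshes tend to $0$ and that the $\phi_n$ are compatible partition bijections) shows that $h$ is a homeomorphism of $A$ onto $B$; and since $a\in P_n(a)\subseteq B_{\delta_n}(\phi_n(P_n(a)))$ while $h(a)\in\phi_n(P_n(a))$, one gets $\dist(a,h(a))<\delta_n+\eta_n\to\delta$, whence $\|h\|\le\delta$.

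The hard part will be the inductive refinement step: given a matched pair $(P,Q)$ with $\dist_H(P,Q)<\delta_n$, I must split both $P$ and $Q$ into clopen pieces of diameter $<\eta_{n+1}$ and pair them so that matched sub-pieces again satisfy $\dist_H<\delta_{n+1}$, keeping both sides small \emph{simultaneously}. The obstacle is that proximity between small pieces of $P$ and small pieces of $Q$ is a many-to-many relation, so a naive grouping makes the pieces large on one side. My resolution is a bipartite blow-up that exploits the fact that every clopen subset of a Cantor space is again a Cantor space, hence may be subdivided into arbitrarily many nonempty clopen pieces. Concretely, partition $P$ into small clopen pieces $\{P_\alpha\}$ and $Q$ into small clopen pieces $\{Q_\beta\}$ of diameter $<\eta_{n+1}$, and form the bipartite graph with an edge $\alpha\sim\beta$ whenever $\dist(P_\alpha,Q_\beta)<\delta_n$. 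Because $\dist_H(P,Q)<\delta_n$, every vertex has at least one incident edge. I then subdivide each $P_\alpha$ into $\deg(\alpha)$ nonempty clopen pieces indexed by the edges at $\alpha$, likewise each $Q_\beta$, and match the two pieces attached to each edge. This is a genuine bijection (indexed by the edge set) between two clopen partitions into pieces of diameter $<\eta_{n+1}$, and for an edge $\alpha\sim\beta$ the diameter bounds give $\dist(p,q)<\delta_n+2\eta_{n+1}$ for all $p\in P_\alpha$ and $q\in Q_\beta$; choosing $\eta_{n+1}<(\delta_{n+1}-\delta_n)/2$ forces every matched sub-pair to satisfy $\dist_H<\delta_{n+1}$, exactly as required.

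Finally, applying this refinement step to each pair of the level-$n$ partitions, starting from $\mathcal P_0=\{A\}$ and $\mathcal Q_0=\{B\}$ (where $\dist_H(A,B)<\delta_0$), produces the desired nested matched partitions, which assemble into $h$ as above. I expect the only genuinely delicate point to be the blow-up lemma of the third paragraph; the limit passage and the homeomorphism verification are routine once the invariants $\mesh(\mathcal P_n),\mesh(\mathcal Q_n)\to 0$ and $\dist_H(P,\phi_n(P))<\delta_n$ are in place.
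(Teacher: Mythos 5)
Your proof is correct, and it takes a genuinely different route from the paper's. Both arguments hinge on matching clopen partitions of $A$ and $B$ so that matched pieces are close, but they differ in how the matching is built and, more importantly, in how a homeomorphism is extracted from it. The paper works at a single scale: given $\delta>0$ it takes one clopen partition of $A$ of mesh $<\delta$, sends each piece injectively to a nearby point of $B$, chooses disjoint small clopen neighborhoods of these points, absorbs the uncovered part of $B$ by an injection back into $A$ followed by a refinement of the partition of $A$, and then defines $h$ piecewise by invoking Proposition \ref{propCantorChico} (any two Cantor spaces are homeomorphic) on each matched pair of clopen pieces, which yields $\|h\|<\rho+3\delta$ with $\rho=\dist_H(A,B)$. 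You never invoke that classification: your symmetric bipartite blow-up is iterated at finer and finer scales, and the homeomorphism appears only as the limit of the nested partition bijections --- in effect you re-prove Brouwer's theorem inside the argument, using only the fact that a nonempty clopen subset of a Cantor space can be split into any prescribed finite number of nonempty clopen pieces. What each approach buys: the paper's proof is shorter because Proposition \ref{propCantorChico} is already available and permits stopping after one scale; yours is self-contained, symmetric in $A$ and $B$ (avoiding the paper's asymmetric leftover bookkeeping), and your blow-up lemma isolates the combinatorial core cleanly, at the cost of the additional limit-passage machinery (nested partitions, vanishing meshes, and the verification that the limit map is a homeomorphism).
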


\begin{proof}
The inequality $\leq$ is clear (and holds for arbitrary $A$ and $B$ homeomorphic).
Define $\rho=\dist_H(A,B)$ and for $\delta>0$ let $\epsilon=3\delta+\rho$.
%
%
%
Let $\U_A$ be a clopen partition of $A$ with $\mesh(\U_A)<\delta$. 
Take an injective function $f_1\colon \U_A\to B$ such that for each $U\in\U_A$ there is $x_U\in U$ with $\dist(x_U,f_1(U))<\rho+\delta$. 
For each $U\in \U_A$ consider a clopen set $U'\subset B$ such that 
$f_1(U)\in U'$, $\U^1_B=\{U':U\in\U_A\}$ are pairwise disjoint and $\mesh(\U^1_B)<\delta$.

Suppose that $\U^1_B$ does not cover $B$. 
Let $\U^2_B$ be a clopen partition of $B\setminus \cup \U^1_B$ with $\mesh(\U^2_B)<\delta$. 
Take an injective function $f_2\colon \U^2_B\to A$ such that 
for each $V\in\U^2_B$ there is $y_V\in V$ with 
$\dist(y_V,f_2(V))<\rho+\delta$.
Let $\U'_B=\U^1_B\cup\U^2_B$ and 
define $f_3\colon \U'_B\to \U_A$ as 
$f_3(U')=U$ for all $U'\in\U^1_B$ and
if $V\in \U^2_B$ then $f_3(V)\in\U_A$ contains $y_V$.
Consider a clopen partition $\U'_A$ of $A$, refining $\U_A$, 
admitting $f_4\colon \U'_B\to \U'_A$ bijective with $f_4(V)\subset f_3(V)$ for all $V\in \U'_B$. 
Let $h\colon B\to A$ be a homeomorphism such that $h(V)=f_4(V)$ for all $V\in \U'_B$.
Since $\mesh(\U'_A),\mesh(\U'_B)<\delta$, and for each $V\in \U'_B$ there are $x\in V$ and $y\in h(V)$ such that 
$\dist(x,y)<\rho+\delta$ we conclude that $\|h\|<\rho+3\delta=\epsilon$. 
%
\end{proof}

\subsection{Polish spaces}

A topological space is said to be \emph{Polish} if it is separable and admits a compatible complete metric.
A set $R\subset X$ is $G_\delta$ if it is a countable intersection of open subsets of $X$.
\begin{thm}[\cite{Akin2004}*{Theorem 1.2}]
\label{thmTopFubini}
Let $\varphi\colon X\to Y$ be a continuous open surjection with $X$ and $Y$ Polish spaces. 
If $R$ is a dense $G_\delta$ subset of $X$ then 
\[
 \varphi_\#(R)=\{y\in Y:\varphi^{-1}(y)\cap R\text{ is dense in }\varphi^{-1}(y)\}
\]
is a dense $G_\delta$ subset of $Y$.
 \end{thm}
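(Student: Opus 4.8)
The plan is to reduce the statement to the case of a single dense open set by exploiting the Baire category theorem fibrewise, and then to settle that case directly using the openness of $\varphi$. First I would write $R=\bigcap_{n}U_{n}$ with each $U_{n}\subset X$ open and dense, which is possible since $R$ is a dense $G_\delta$. For an open set $U\subset X$ set
\[
D(U)=\{y\in Y:U\cap\varphi^{-1}(y)\text{ is dense in }\varphi^{-1}(y)\}.
\]
The first key step is the identity $\varphi_\#(R)=\bigcap_{n}D(U_{n})$. The inclusion $\subseteq$ is immediate: since $R\subset U_{n}$, a subset of $\varphi^{-1}(y)$ that is dense in $\varphi^{-1}(y)$ and contained in $R$ is also contained in $U_{n}\cap\varphi^{-1}(y)$. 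For $\supseteq$ I would use that each fibre $\varphi^{-1}(y)$ is closed in $X$, hence Polish; if $y\in\bigcap_{n}D(U_{n})$ then each $U_{n}\cap\varphi^{-1}(y)$ is open and dense in this Polish space, so by the Baire category theorem their intersection $R\cap\varphi^{-1}(y)$ is dense in $\varphi^{-1}(y)$, that is, $y\in\varphi_\#(R)$.

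With this reduction it suffices to show that $D(U)$ is a dense $G_\delta$ in $Y$ for every open dense $U\subset X$, because an intersection of dense $G_\delta$ subsets of the Polish (hence Baire) space $Y$ is again a dense $G_\delta$. Fix a countable base $\{W_{k}\}$ for $X$. Testing density against this base, $U\cap\varphi^{-1}(y)$ fails to be dense in $\varphi^{-1}(y)$ exactly when some $W_{k}$ meets $\varphi^{-1}(y)$ while $W_{k}\cap U$ does not. Since $\varphi$ is open, $\varphi(W_{k})$ and $\varphi(W_{k}\cap U)$ are open, and "$W_{k}$ meets $\varphi^{-1}(y)$" becomes "$y\in\varphi(W_{k})$", so this translates into
\[
Y\setminus D(U)=\bigcup_{k}\bigl(\varphi(W_{k})\setminus\varphi(W_{k}\cap U)\bigr).
\]

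Here the main point, and where the density of $U$ enters, is that $\varphi(W_{k}\cap U)$ is dense in $\varphi(W_{k})$: given a nonempty open $O\subset\varphi(W_{k})$, the set $\varphi^{-1}(O)\cap W_{k}$ is nonempty and open, hence meets the dense set $U$, and the $\varphi$-image of a point of this intersection lies in $O\cap\varphi(W_{k}\cap U)$. Consequently each $\varphi(W_{k})\setminus\varphi(W_{k}\cap U)$ is contained in the boundary of the open set $\varphi(W_{k}\cap U)$, hence is nowhere dense, so $Y\setminus D(U)$ is meagre. Moreover each summand is the intersection of an open set with a closed set and is therefore $F_\sigma$ (open subsets of a metric space are $F_\sigma$), so $Y\setminus D(U)$ is $F_\sigma$ and $D(U)$ is $G_\delta$; being the complement of a meagre set in a Baire space, $D(U)$ is also dense.

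The only genuine obstacle is the fibrewise Baire argument in the first step: one must recognize that the condition defining $\varphi_\#(R)$ is a statement about Baire category inside each fibre, and that the fibres are Polish because they are closed. Once this is in place, the openness of $\varphi$ does the remaining work by converting "meets the fibre" into membership in an honest open subset of $Y$, after which the density and the $G_\delta$ property of each $D(U)$ follow from standard facts about boundaries of open sets.
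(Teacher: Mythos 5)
The paper does not prove this statement at all: it is quoted verbatim as Theorem 1.2 of Akin's \emph{Lectures on Cantor and Mycielski Sets for Dynamical Systems} and used as a black box, so there is no internal proof to compare yours against. Your argument is correct and complete, and it is the standard Kuratowski--Ulam-type proof for open maps (essentially the argument behind the cited result): the fibrewise reduction $\varphi_\#(R)=\bigcap_n D(U_n)$ is justified because each fibre is closed in $X$, hence Polish and Baire; the identity $Y\setminus D(U)=\bigcup_k\bigl(\varphi(W_k)\setminus\varphi(W_k\cap U)\bigr)$ is exactly where openness of $\varphi$ is needed; the density of $\varphi(W_k\cap U)$ in $\varphi(W_k)$ uses the density of $U$ correctly; and the bookkeeping ($F_\sigma$ complements via open sets being $F_\sigma$ in metric spaces, meagreness via boundaries of open sets, Baire in $Y$) is all sound. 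The only cosmetic remark is that one should note fibres may in principle be empty (not here, since $\varphi$ is surjective), in which case both sides of your reduction hold vacuously; your proof needs no change for this.
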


From this result we deduce the following consequence which is the key for Theorems \ref{thmGenDSSH} and \ref{thmGHGen}.
 
\begin{cor}
\label{corTopFubini}
Let $\varphi\colon X\to Y$ be a continuous open surjection with $X$ and $Y$ Polish spaces. 
If $R$ is a dense $G_\delta$ subset of $X$ and $R=\varphi^{-1}(\varphi(R))$ then $\varphi(R)$ is a 
dense $G_\delta$ subset of $Y$.
\end{cor}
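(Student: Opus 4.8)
The plan is to show that the saturation hypothesis $R=\varphi^{-1}(\varphi(R))$ forces the set $\varphi_\#(R)$ from Theorem \ref{thmTopFubini} to coincide \emph{exactly} with $\varphi(R)$; once this identity is established, the conclusion is immediate. I would first record the dichotomy that saturation provides: for every $y\in Y$ the fiber $\varphi^{-1}(y)$ is either entirely contained in $R$ or entirely disjoint from $R$. Indeed, if $y\in\varphi(R)$ then $\varphi^{-1}(y)\subseteq\varphi^{-1}(\varphi(R))=R$, whereas if $y\notin\varphi(R)$ then any $x\in\varphi^{-1}(y)\cap R$ would give $y=\varphi(x)\in\varphi(R)$, a contradiction, so $\varphi^{-1}(y)\cap R=\emptyset$.

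Next I would prove the equality $\varphi_\#(R)=\varphi(R)$. For the inclusion $\varphi(R)\subseteq\varphi_\#(R)$, if $y\in\varphi(R)$ then by the dichotomy $\varphi^{-1}(y)\cap R=\varphi^{-1}(y)$, which is trivially dense in $\varphi^{-1}(y)$, so $y\in\varphi_\#(R)$. For the reverse inclusion I would argue by contrapositive: if $y\notin\varphi(R)$ then $\varphi^{-1}(y)\cap R=\emptyset$; since $\varphi$ is onto, the fiber $\varphi^{-1}(y)$ is nonempty, and the empty set is not dense in a nonempty space, whence $y\notin\varphi_\#(R)$. This is the single point in the argument where surjectivity is genuinely needed.

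Finally, Theorem \ref{thmTopFubini} applies verbatim, since the standing hypotheses on $\varphi$, $X$, $Y$ and $R$ are precisely those assumed here; it yields that $\varphi_\#(R)$ is a dense $G_\delta$ subset of $Y$. Having identified $\varphi_\#(R)$ with $\varphi(R)$, the corollary follows at once. I do not expect any real obstacle here: the entire content lies in the set-theoretic identity $\varphi_\#(R)=\varphi(R)$, and the only subtle point to keep in mind is that surjectivity guarantees nonempty fibers, which is exactly what prevents a point outside $\varphi(R)$ from \emph{vacuously} belonging to $\varphi_\#(R)$.
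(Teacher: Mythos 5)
Your proof is correct and follows essentially the same route as the paper: both establish the identity $\varphi_\#(R)=\varphi(R)$ from the saturation hypothesis and then invoke Theorem \ref{thmTopFubini}. Your write-up is slightly more explicit than the paper's (in particular about surjectivity guaranteeing nonempty fibers, so that density of $\varphi^{-1}(y)\cap R$ forces it to be nonempty), but the underlying argument is the same.
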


\begin{proof}
 Since $R=\varphi^{-1}(\varphi(R))$, we have that $y\in\varphi(R)$ if and only if $\varphi^{-1}(y)\subset R$ 
 if and only if $\varphi^{-1}(y)\cap R\neq\emptyset$. 
  This proves that $\varphi(R)=\varphi_\#(R)$. 
  Thus, the result follows by Theorem \ref{thmTopFubini}.
\end{proof}

The next classical result allows us to conclude that a $G_\delta$ subset of a Polish space is Polish.

\begin{thm}[Alexandroff’s theorem \cite{Oxtobi}*{Theorems 12.1 and 12.3}]
\label{thmAlex}
In a complete space, a subset
is $G_\delta$ if and only if it is completely metrizable.
\end{thm}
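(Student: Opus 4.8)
The plan is to prove both implications, writing the ambient complete metric space as $(X,d)$ and the subset as $A\subset X$, with the two metrics on $A$ being the restriction of $d$ and a candidate complete metric $\rho$.

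For the direction ``$G_\delta \Rightarrow$ completely metrizable'', suppose $A=\bigcap_{n\geq 1}U_n$ with each $U_n\subset X$ open. The idea is to build a metric compatible with the subspace topology that blows up as one approaches the boundary of each $U_n$, thereby preventing Cauchy sequences from escaping $A$. Concretely, I would set $f_n(x)=1/\dist(x,X\setminus U_n)$ for $x\in A$ (with $f_n\equiv 0$ when $U_n=X$) and define
\[
\rho(x,y)=d(x,y)+\sum_{n\geq 1}\min\left(2^{-n},\,|f_n(x)-f_n(y)|\right).
\]
Each $f_n$ is continuous on $A$, so $\rho$ induces the subspace topology. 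To check completeness, take a $\rho$-Cauchy sequence $(x_k)$: it is $d$-Cauchy, hence $d$-convergent to some $x\in X$, and the control on the $f_n$ terms forces each $f_n(x_k)$ to stay bounded, so $x$ cannot lie in $X\setminus U_n$ for any $n$; thus $x\in A$, and a dominated-convergence argument gives $\rho(x_k,x)\to 0$.

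For the converse ``completely metrizable $\Rightarrow G_\delta$'', suppose $A$ carries a complete compatible metric $\rho$, and relate $\rho$ to $d$ by an oscillation argument. For each $n$ let
\[
V_n=\bigcup\{\,U\subset X: U\text{ open},\ \diam_d(U)<1/n,\ \diam_\rho(U\cap A)<1/n\,\},
\]
an open subset of $X$. Since $d$ and $\rho$ induce the same topology on $A$, every point of $A$ has such a neighborhood, so $A\subseteq \overline{A}\cap\bigcap_n V_n$, and the right-hand side is $G_\delta$ (closed sets are $G_\delta$ in a metric space). The crux is the reverse inclusion: given $x\in \overline{A}\cap\bigcap_n V_n$, choose for each $n$ a witnessing open $U_n\ni x$, pass to the decreasing neighborhoods $G_n=U_1\cap\cdots\cap U_n$, and pick $a_n\in G_n\cap A$ (nonempty since $x\in\overline{A}$). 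The bound $\diam_\rho(G_n\cap A)<1/n$ makes $(a_n)$ a $\rho$-Cauchy sequence, which $\rho$-converges in $A$ by completeness, while $\diam_d(G_n)<1/n$ forces its $d$-limit to be $x$; as the topologies agree, the two limits coincide, so $x\in A$.

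I expect the second direction to be the main obstacle. There one must exploit the abstract completeness of $\rho$ with no a priori relation between $\rho$ and $d$ beyond inducing the same topology on $A$, and the delicate point is ensuring that the extracted $\rho$-Cauchy sequence has its $\rho$-limit coincide with the ambient $d$-limit $x$. The simultaneous diameter control on the sets $G_n$ in both metrics is exactly what reconciles the two limits.
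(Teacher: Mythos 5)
Your proof is correct, but note that the paper does not prove this statement at all: it is imported as a classical result, cited to Oxtoby's \emph{Measure and Category} (Theorems 12.1 and 12.3), so there is no in-paper argument to compare against. Measured against that source, your forward direction ($G_\delta\Rightarrow$ completely metrizable) is essentially identical to Oxtoby's Theorem 12.1: the metric $\rho(x,y)=d(x,y)+\sum_n \min\bigl(2^{-n},|f_n(x)-f_n(y)|\bigr)$ with $f_n(x)=1/\dist(x,X\setminus U_n)$ is exactly his construction, and your verification (truncated pseudometrics sum to a compatible metric; $\rho$-Cauchy implies $d$-Cauchy; boundedness of each $f_n$ along the sequence keeps the $d$-limit inside every $U_n$) is sound. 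Your converse is where you genuinely diverge: Oxtoby deduces it from Lavrentiev's extension theorem (his Theorem 12.2) by extending the identity on $A$ to a homeomorphism between $G_\delta$ sets and using that a complete subspace is closed in its completion, whereas you give a direct argument via the open sets $V_n$ of points admitting neighborhoods of small $d$-diameter and small $\rho$-diameter on $A$. Your route is self-contained and avoids the extension machinery, at the cost of the somewhat delicate double-diameter bookkeeping, which you handle correctly: the sets $G_n=U_1\cap\dots\cap U_n$ meet $A$ because $x\in\overline{A}$, the $\rho$-diameter bound makes $(a_n)$ $\rho$-Cauchy with limit $a\in A$, the $d$-diameter bound forces $a_n\to x$ in $d$, and compatibility of the topologies identifies $a=x$. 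Both directions are complete; the only cosmetic caveat is that equality $A=\overline{A}\cap\bigcap_n V_n$ should be stated explicitly as the conclusion, and the fact that closed sets are $G_\delta$ in metric spaces (which you do invoke) is what finishes the argument.
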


\subsection{Spaces of dynamical systems}
\label{secDS}
Let $(\V,\dist)$ be a metric space.
On $\V^2=\V\times\V$ we consider the max-metric $\dist_2\colon\V^2\times\V^2\to\R$ given by 
\begin{equation}
 \label{ecuDistMax}
 \dist_2((a,b),(c,d))=\max\{\dist(a,c),\dist(b,d)\}.
\end{equation} 
Consider the projections $\pi_i\colon \V^2\to\V$ given by $\pi_i(a_1,a_2)=a_i$ for $i=1,2$. 
Define the set 
\[
 \DS(\V)=\{f\in\K(\V^2):\pi_1(f)=\pi_2(f)\}.
\]
A \emph{dynamical system} is any $f\in\DS(\V)$ (i.e., a surjective relation). 
Denote by $\D$ the Hausdorff distance of $\K(\V^2)$ associated to $\dist_2$.
For $f\in\DS(\V)$ define $M(f)=\pi_i(f)$, and we say that $f$ is a dynamical system on $M(f)$. 
Note that since the projections are continuous and $f$ is compact, we have that $M(f)$ is compact.
In $\DS(\V)$ there are three classes: 
\begin{enumerate}
 \item homeomorphisms of $M\subset \V$,
 \item surjective continuous maps of $M\subset\V$, 
 \item set valued maps of $M\subset \V$, where for each $x\in M$, $f(x)=\{y\in M:(x,y)\in f\}$ may not be a singleton.
\end{enumerate}
In any case, if $f\in\DS(\V)$ we define $f^{-1}=\{(y,x)\in\V^2:(x,y)\in f\}\in\DS(\V)$.

\begin{rmk}
\label{rmkDistDS}
The distance between $f,g\in\DS(\V)$, $\D(f,g)$, by definitions 
\eqref{ecuHausd} and \eqref{ecuDistMax} is the infimum $\epsilon>0$ 
satisfying the following conditions:
\begin{itemize}
 \item for all $(x,y)\in f$ there is $(x',y')\in g$ such that $\dist(x,x')<\epsilon$ and $\dist(y,y')<\epsilon$,
 \item for all $(x,y)\in g$ there is $(x',y')\in f$ such that $\dist(x,x')<\epsilon$ and $\dist(y,y')<\epsilon$.
\end{itemize}
\end{rmk}

\begin{rmk}
 Since $(x,y)\mapsto (y,x)$ is an isometry of $\V^2$, we have that
$\D(f,g)=\D(f^{-1},g^{-1})$ for all $f,g\in\DS(\V)$.
\end{rmk}

If $M,N\in\K(\V)$ and $f,g\colon M\to N$ are continuous functions,
the $C^0$-\emph{distance} is defined as
\begin{equation}
 \label{ecuDistC0}
 \dist_{C^0}(f,g)=\sup_{x\in M}\dist(f(x),g(x)).
\end{equation}
Let $C(M,N)$ be the set of all the continuous functions $f\colon M\to N$.


\begin{prop}[\cite{Akin}*{Proposition 20, Chapter 7, p. 135}]
\label{propCompDyC0}
The metric $\D$ is compatible with the $C^0$-topology of $C(M,N)$.
\end{prop}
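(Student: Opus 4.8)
The plan is to show that, when each continuous map is identified with its (compact) graph in $\K(\V^2)$, the two metrics $\D$ and $\dist_{C^0}$ induce the same topology on $C(M,N)$. Since both live on the same underlying set, it suffices to prove that the identity map is continuous in both directions, and I would do this by comparing the two metrics locally at each fixed $f$.

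First I would establish the easy global inequality $\D(f,g)\le\dist_{C^0}(f,g)$. Indeed, if $\dist_{C^0}(f,g)<\epsilon$ then for every point $(x,f(x))$ of the graph of $f$ the point $(x,g(x))$ lies in the graph of $g$ with $\dist_2((x,f(x)),(x,g(x)))=\dist(f(x),g(x))<\epsilon$, and symmetrically; by Remark \ref{rmkDistDS} this yields $\D(f,g)\le\epsilon$. Hence $\dist_{C^0}$-convergence implies $\D$-convergence, i.e. every $\D$-open set is $\dist_{C^0}$-open.

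For the converse I would fix $f\in C(M,N)$ and $\epsilon>0$ and exploit that $M$ is compact, so $f$ is uniformly continuous: choose $\eta>0$ with $\dist(x,x')<\eta\Rightarrow\dist(f(x),f(x'))<\epsilon$, and set $\delta=\min\{\eta,\epsilon\}$. Given $g\in C(M,N)$ with $\D(f,g)<\delta$, for each $x\in M$ the point $(x,g(x))$ lies in the graph of $g$, so by Remark \ref{rmkDistDS} there is $(x',f(x'))$ in the graph of $f$ with $\dist(x,x')<\delta\le\eta$ and $\dist(g(x),f(x'))<\delta$. The triangle inequality then gives $\dist(f(x),g(x))\le\dist(f(x),f(x'))+\dist(f(x'),g(x))<\epsilon+\delta\le 2\epsilon$. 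Taking the supremum over $x$ yields $\dist_{C^0}(f,g)\le 2\epsilon$, so the $\D$-ball of radius $\delta$ about $f$ is contained in the $\dist_{C^0}$-ball of radius $2\epsilon$. Together with the first step this shows the two topologies coincide.

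The only real subtlety, and the step I expect to be the crux, is this converse: one cannot invoke any modulus of continuity for the \emph{varying} map $g$, so the argument must be anchored on the uniform continuity of the \emph{fixed} map $f$. The trick is to start from a point of the graph of $g$, jump to a nearby point of the graph of $f$ using the Hausdorff closeness, and then move back along the graph of $f$ using its uniform continuity; compactness of $M$ is precisely what makes the estimate uniform in $x$.
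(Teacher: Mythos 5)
Your argument is correct. Note, however, that the paper does not prove Proposition \ref{propCompDyC0} at all: it is quoted from Akin's book (Proposition 20, Chapter 7, p.~135), so there is no internal proof to compare against; what you have written supplies that missing argument, and it is the standard one. Both halves are sound: the inequality $\D(f,g)\le\dist_{C^0}(f,g)$ follows immediately from Remark \ref{rmkDistDS} applied to the pairs $(x,f(x))$ and $(x,g(x))$, and your converse correctly anchors the estimate on the uniform continuity of the \emph{fixed} map $f$ rather than on any modulus for the varying $g$ --- this is exactly the point where a naive argument would fail, since $\D$-closeness alone gives no control on the continuity of $g$. The factor $2\epsilon$ in the final bound is harmless for topological equivalence, so the proof is complete as stated.
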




\begin{rmk}
\label{rmkNoCompleto}
It is well known that the space $C(M,M)$ with $\dist_{C^0}$ is complete. 
By Proposition \ref{propCompDyC0}, $\D$ is compatible with the topology induced by $\dist_{C^0}$. 
Let us show that $C(M,M)$ is not complete with respect to $\D$. 
Consider $M$ as the interval $[0,1]$ (or an isometric copy in $\V$) and define 
$f_n(x)=x^n$ for all $n\in\N$ and all $x\in [0,1]$. 
Define $f\subset [0,1]\times [0,1]$ as $f=([0,1]\times\{0\})\cup(\{1\}\times [0,1])$.
It is easy to see that $\D(f_n,f)\to 0$. 
Thus, $f_n$ is a Cauchy sequence with respect to $\D$. 
Its limit exists in $\DS(\V)$, is the set-valued function $f$, but $f$ is not a function. 
This example explains why we consider non-injective maps and set-valued functions, even if our interest is on 
homeomorphisms, the set-valued functions completes the space of homeomorphisms (with respect to $\D$, which is 
the metric that allows us to measure the distance between functions on different spaces).
\end{rmk}

\begin{rmk}
\label{rmkDistsChotas}
For all $f,g\in \DS(\V)$ it holds that $\dist_H(M(f),M(g))\leq\D(f,g)$.
Given $X\in\K(\V)$ denote by $i_X$ the identity map of $X$. 
 It holds that $\dist_H(X,Y)=\D(i_X,i_Y)$ for all $X,Y\in\K(\V)$.
\end{rmk}

Since Cantor homeomorphisms play a key role we introduce the following notation
\[
\DS_{CH}(\V)= \{f\in\DS(\V):f\text{ is a homeomorphism of a Cantor space}\}.
\]
The next result extends Proposition \ref{propCantorCerca}, and will be used to estimate the distance between Cantor homeomorphisms. 

\begin{prop}
\label{propCantorCercaDos}
 If $g,j\in\DS_{CH}(\V)$ then $\D(g,j)<\delta$ if and only if there are homeomorphisms
 $h_1,h_2\colon M(g)\to M(j)$ such that $\|h_1\|,\|h_2\|<\delta$ and $h_2  g=j  h_1$.
\end{prop}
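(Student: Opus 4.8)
The plan is to view the graphs $g,j\subset\V^2$ as Cantor spaces inside the metric space $(\V^2,\dist_2)$ and to reduce the whole statement to Proposition \ref{propCantorCerca} applied there. First I would record two preliminary facts. By the definition of $\D$ (see Remark \ref{rmkDistDS}), $\D(g,j)$ is exactly the Hausdorff distance $\dist_H(g,j)$ computed in $(\V^2,\dist_2)$. Moreover $g$ and $j$ are Cantor spaces: since $g,j\in\DS_{CH}(\V)$, the projection $\pi_1$ restricts to a homeomorphism $g\to M(g)$, and $M(g)$ is a Cantor space (likewise for $j$). Finally, the proof of Proposition \ref{propCantorCerca} uses only clopen partitions and the ambient metric, so it is valid verbatim in $(\V^2,\dist_2)$; I would invoke it in the form $\dist_H(g,j)=\inf\{\|H\|:H\in H(g,j)\}$, where now $\|H\|=\sup_{p\in g}\dist_2(p,H(p))$.

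The central step I would establish is a bijective correspondence between homeomorphisms $H\colon g\to j$ and pairs $(h_1,h_2)$ of homeomorphisms $M(g)\to M(j)$ satisfying $h_2 g=j h_1$. Given such a pair, set $H(x,g(x))=(h_1(x),h_2(g(x)))$; the relation $h_2 g=j h_1$ says precisely that $h_2(g(x))=j(h_1(x))$, i.e.\ that the image point lies on the graph $j$, so $H$ is a well-defined homeomorphism $g\to j$. Conversely, a homeomorphism $H\colon g\to j$ has image in $j$, hence is of the form $H(x,g(x))=(h_1(x),j(h_1(x)))$ for a homeomorphism $h_1\colon M(g)\to M(j)$ (the first coordinate of $H$ read through the parametrizations $\pi_1|_g$ and $\pi_1|_j$); setting $h_2=j h_1 g^{-1}$ gives a homeomorphism with $h_2 g=j h_1$ that recovers $H$. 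The point that makes both directions trivial afterwards is the norm identity: since $g$ is onto $M(g)$,
\[
\|H\|=\sup_{x\in M(g)}\max\{\dist(x,h_1(x)),\,\dist(g(x),h_2(g(x)))\}=\max\{\|h_1\|,\|h_2\|\}.
\]

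With this in hand the equivalence is immediate. For the forward direction, if $\D(g,j)<\delta$ then $\inf_H\|H\|=\dist_H(g,j)<\delta$, so there is a homeomorphism $H\colon g\to j$ with $\|H\|<\delta$, and its associated pair satisfies $\|h_1\|,\|h_2\|\le\|H\|<\delta$ and $h_2 g=j h_1$. For the converse, a pair $(h_1,h_2)$ with $\|h_1\|,\|h_2\|<\delta$ and $h_2 g=j h_1$ assembles into $H$ with $\|H\|=\max\{\|h_1\|,\|h_2\|\}<\delta$, whence $\D(g,j)=\dist_H(g,j)\le\|H\|<\delta$. I expect the only genuine content to be the reduction itself: recognizing that the conjugacy-type relation $h_2 g=j h_1$ is exactly what lets the pair $(h_1,h_2)$ be the two coordinate components of a single homeomorphism of the graphs. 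The main things to check carefully are that Proposition \ref{propCantorCerca} indeed transfers to $(\V^2,\dist_2)$ and that the displayed norm identity holds; both are routine once the correspondence is set up.
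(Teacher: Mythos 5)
Your proposal is correct and follows essentially the same route as the paper: both reduce the statement to Proposition \ref{propCantorCerca} applied to the graphs $g,j$ viewed as Cantor subsets of $(\V^2,\dist_2)$, and both decompose a graph homeomorphism $h\colon g\to j$ into its coordinate components $h_1,h_2$ via the parametrizations $x\mapsto(x,g(x))$. Your write-up is somewhat more complete than the paper's (which only treats the forward direction explicitly and states $\|h_i\|\le\|h\|$ rather than the exact identity $\|H\|=\max\{\|h_1\|,\|h_2\|\}$), but the underlying idea is identical.
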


\begin{proof}
Assume that $\D(g,j)<\delta$. 
By Proposition \ref{propCantorCerca}, there is a homeomorphism $h\colon g\to j$ with $\|h\|<\delta$. 
Let $\pi_i\colon M(j)\times M(j)\to M(j)$ be the canonical projections, for $i=1,2$, and 
define 
$h_1(x)=\pi_1(h(x,g(x))$ and $h_2(g(x))=\pi_2(h(x,g(x)))$. 
That is, $h(x,g(x))=(h_1(x),h_2(g(x)))\in j$ and $j(h_1(x))=h_2(g(x))$.
Finally note that $\|l_i\|\leq\|h\|$ for $i=1,2$.
\end{proof}

We give two elementary results that will be used in \S\ref{secCompDS}.

\begin{prop}
\label{propFinRelDense}
Finite relations are dense in $\DS(\V)$.
\end{prop}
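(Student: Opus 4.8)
The plan is to approximate an arbitrary $f\in\DS(\V)$ by rounding both coordinates onto a finite net of its base space. Fix $f\in\DS(\V)$ and $\epsilon>0$, and write $M=M(f)=\pi_1(f)=\pi_2(f)$, a compact subset of $\V$. Choose $\delta$ with $0<\delta<\epsilon$. Since $M$ is compact it is totally bounded, so there is a finite set $E=\{p_1,\dots,p_m\}\subset M$ that is $\delta$-dense in $M$. Using this net I would define a rounding map $\rho\colon M\to E$ by letting $\rho(x)$ be any point of $E$ with $\dist(x,\rho(x))<\delta$; such a point exists for every $x\in M$ by the $\delta$-density of $E$, and no continuity of $\rho$ is needed.

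I would then set
\[
 g=\{(\rho(x),\rho(y)):(x,y)\in f\}.
\]
Because $\rho$ takes values in the finite set $E$, we have $g\subseteq E\times E$, so $g$ is finite and in particular compact. The decisive point is the surjectivity condition: since $\pi_1(f)=\pi_2(f)=M$, the set of first coordinates occurring in $g$ is $\{\rho(x):x\in M\}=\rho(M)$, and likewise the set of second coordinates is $\{\rho(y):y\in M\}=\rho(M)$. Hence $\pi_1(g)=\rho(M)=\pi_2(g)$, and therefore $g\in\DS(\V)$ is a genuine finite relation.

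Finally I would estimate $\D(f,g)$ using Remark \ref{rmkDistDS}. Every point of $g$ has the form $(\rho(x),\rho(y))$ with $(x,y)\in f$, and $\dist_2((\rho(x),\rho(y)),(x,y))=\max\{\dist(x,\rho(x)),\dist(y,\rho(y))\}<\delta$; conversely every $(x,y)\in f$ satisfies the same estimate with respect to $(\rho(x),\rho(y))\in g$. Thus both conditions of Remark \ref{rmkDistDS} hold with $\delta$, giving $\D(f,g)\leq\delta<\epsilon$, which proves density. The step I expect to be the only real obstacle is exactly the surjectivity requirement $\pi_1(g)=\pi_2(g)$: a naive finite $\delta$-net of $f$ viewed merely as a subset of $\V^2$ need not be a surjective relation, so it need not lie in $\DS(\V)$. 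Applying the \emph{single} rounding map $\rho$ to both coordinates is what forces the two projections to collapse to the same set $\rho(M)$, keeping the approximant inside $\DS(\V)$ while remaining $\delta$-close to $f$.
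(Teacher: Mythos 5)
Your proof is correct and takes essentially the same approach as the paper: both discretize $M(f)$ by a finite net and define the approximant by proximity, the only (immaterial) difference being that the paper takes $g$ to be \emph{all} pairs of net points $\epsilon$-close to some point of $f$, while you take the image of $f$ under the single rounding map $\rho\times\rho$ --- each device secures the surjectivity condition $\pi_1(g)=\pi_2(g)$, the paper via $Q\subset M(f)$ and the surjectivity of $f$, yours via using the same $\rho$ on both coordinates. The distance estimate through Remark \ref{rmkDistDS} is identical in both arguments.
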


\begin{proof}
Given $f\in\DS(\V)$ and $\epsilon>0$, as $M(f)$ is compact, we can take a finite set $Q\subset M(f)$ with $\dist_H(Q,M(f))<\epsilon$. 
Define $g\subset Q^2$ by 
$(a,b)\in g$ if and only if 
there is $(x,y)\in f$ with $\dist_2((x,y),(a,b))<\epsilon$ (i.e., $\dist(x,a)<\epsilon$ and $\dist(y,b)<\epsilon$). 
By the definition of $g$ we have that $\D(f,g)<\epsilon$ and $M(g)=Q$ is finite.
\end{proof}

\begin{prop}
\label{propCharFun}
If $f\in\DS(\V)$ and $f(x)=\{y\in M(f):(x,y)\in f\}$ is a singleton for all $x\in M(f)$ then $f$ is a continuous function. 
\end{prop}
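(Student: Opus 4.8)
The plan is to exhibit $f$ as the graph of a continuous self-map. Since $f(x)$ is a singleton for every $x\in M(f)$ and $\pi_1(f)=M(f)$, I would first define $\tilde f\colon M(f)\to M(f)$ by letting $\tilde f(x)$ be the unique element of $f(x)$. By construction the graph $\{(x,\tilde f(x)):x\in M(f)\}$ coincides with $f$, so proving the statement amounts to proving that $\tilde f$ is continuous. In other words, the whole content is the classical fact that a self-map of a compact metric space whose graph is compact (hence closed) is continuous.

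To establish continuity I would fix $x\in M(f)$ and an arbitrary sequence $x_n\to x$ in $M(f)$, and set $y_n=\tilde f(x_n)\in M(f)$; the goal is $y_n\to\tilde f(x)$. Since $M(f)$ is compact, it suffices to show that every convergent subsequence of $(y_n)$ has limit $\tilde f(x)$. Indeed, in a compact metric space a sequence all of whose convergent subsequences share a single limit $\ell$ must itself converge to $\ell$: otherwise some subsequence stays at distance $\geq\epsilon$ from $\ell$, and extracting a further convergent sub-subsequence (by compactness) would produce a convergent subsequence with limit at distance $\geq\epsilon$ from $\ell$, a contradiction.

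So suppose $y_{n_k}\to y$ for some subsequence. Then $(x_{n_k},y_{n_k})\in f$ and $(x_{n_k},y_{n_k})\to(x,y)$ in $M(f)\times M(f)$. Because $f$ is compact it is closed in $\V^2$, so the limit satisfies $(x,y)\in f$, i.e. $y\in f(x)$. The singleton hypothesis then forces $y=\tilde f(x)$. This pins down the limit of every convergent subsequence of $(y_n)$ as $\tilde f(x)$, which by the previous paragraph gives $y_n\to\tilde f(x)$ and hence the continuity of $\tilde f$.

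I do not expect any genuine obstacle here. The only step demanding a little care is the passage from ``every convergent subsequence converges to $\tilde f(x)$'' to ``the whole sequence converges'', which is precisely where compactness of $M(f)$ enters; the remaining ingredient is just the closedness of the compact set $f$ together with the singleton hypothesis, both applied directly.
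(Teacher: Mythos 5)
Your proof is correct and follows essentially the same route as the paper: both arguments use the closedness of the compact graph $f$ to show that any accumulation point of $f(x_n)$ along a sequence $x_n\to x$ must lie in $f(x)$, whence the singleton hypothesis forces it to equal $f(x)$. The paper's version is just a terser statement of this; your write-up additionally spells out the standard subsequence argument (all convergent subsequences sharing the limit $\tilde f(x)$ implies convergence of the whole sequence, by compactness of $M(f)$), which the paper leaves implicit.
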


\begin{proof}
It is clear that $f$ is a function. 
Given a convergent sequence $x_n\in M(f)$ if $f(x_n)$ has an accumulation point $y\neq f(x)$ then, 
$f$ is compact, $(x,f(x)),(x,y)\in f$. Which contradicts our hypothesis.
\end{proof}

\subsection{On a complete and perfect space}
\label{secCompDS}
We start assuming that $(\V,\dist)$ is complete.
This implies that $\V^2$ is complete with the max-metric \eqref{ecuDistMax}.

\begin{prop}
\label{propDSComplete}
If $(\V,\dist)$ is complete then $(\DS(\V),\D)$ is complete. 
\end{prop}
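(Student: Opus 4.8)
The plan is to show completeness by taking a Cauchy sequence and verifying its limit lands in $\DS(\V)$. Since $\DS(\V)$ is defined as a subset of $\K(\V^2)$, and we already know from Proposition \ref{propHyperCompleto} that $(\K(\V^2),\D)$ is complete (because $\V^2$ is complete with the max-metric), the natural strategy is to prove that $\DS(\V)$ is a \emph{closed} subset of $\K(\V^2)$. A closed subset of a complete metric space is complete, so this reduces the whole problem to a closedness statement.

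First I would let $f_n\in\DS(\V)$ be a Cauchy sequence in $(\K(\V^2),\D)$, and invoke Proposition \ref{propHyperCompleto} to obtain a limit $f\in\K(\V^2)$ with $\D(f_n,f)\to 0$. The only thing that needs checking is that $f$ satisfies the defining condition $\pi_1(f)=\pi_2(f)$, i.e., that $f$ is a surjective relation in both coordinates. Equivalently, I must verify that $M(f)=\pi_1(f)=\pi_2(f)$, meaning every point appearing as a first coordinate of some pair in $f$ also appears as a second coordinate, and vice versa.

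To do this I would fix $(x,y)\in f$ and show $x\in\pi_2(f)$ (the symmetric claim $y\in\pi_1(f)$ is analogous). Using the characterization of $\D$ in Remark \ref{rmkDistsChotas} — or more directly the definition in Remark \ref{rmkDistDS} — pick for each $n$ a pair $(x_n,y_n)\in f_n$ with $\dist(x,x_n)$ and $\dist(y,y_n)$ small, so that $x_n\to x$. Since $f_n\in\DS(\V)$, we have $x_n\in M(f_n)=\pi_2(f_n)$, so there exists $w_n$ with $(w_n,x_n)\in f_n$. Now $(w_n,x_n)$ is a point of $f_n$; because $\D(f_n,f)\to 0$, there is a nearby pair $(w_n',x_n')\in f$ with $x_n'\to x$. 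The subtle point is extracting a limit: I would need the $w_n$ (equivalently $w_n'$) to stay in a compact set so that a subsequence converges, and then use compactness of $f$ to conclude the limit pair $(w,x)$ lies in $f$, giving $x\in\pi_2(f)$.

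The main obstacle is precisely this compactness/convergence step for the auxiliary coordinates $w_n$. Unlike the sequence $x_n$, which converges by construction, the first coordinates $w_n$ are only known to lie in $M(f_n)$, and a priori the sets $M(f_n)$ could fail to be uniformly bounded in a noncompact $\V$. The clean way around this is to observe that $M(f_n)\to M(f)$ in the Hausdorff metric on $\K(\V)$ (by Remark \ref{rmkDistsChotas}, $\dist_H(M(f_n),M(f))\le\D(f_n,f)$), so the $M(f_n)$ together with $M(f)$ form a convergent, hence totally bounded, family; this confines the $w_n$ to a compact region of $\V$ and furnishes a convergent subsequence $w_{n_k}\to w$ with $w\in M(f)$. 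Passing to the limit in $(w_{n_k}',x_{n_k}')\in f$ and using that $f$ is closed yields $(w,x)\in f$, so $x\in\pi_2(f)$. Once both inclusions are established, $f\in\DS(\V)$, proving $\DS(\V)$ is closed in $\K(\V^2)$ and therefore complete.
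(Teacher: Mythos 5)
Your proof is correct, and the overall reduction is the same as the paper's: both invoke Proposition \ref{propHyperCompleto} to get completeness of $(\K(\V^2),\D)$ and then reduce everything to showing that $\DS(\V)$ is closed in $\K(\V^2)$. Where you genuinely diverge is in how closedness is verified. The paper's argument is structural and takes two lines: each projection $\pi_i$ induces a map $\K(\V^2)\to\K(\V)$, $A\mapsto\pi_i(A)$, which is continuous (indeed $1$-Lipschitz for the max-metric, since $\dist_H(\pi_i(A),\pi_i(B))\le\D(A,B)$); hence $\pi_i(f_n)\to\pi_i(f)$ in $(\K(\V),\dist_H)$, and since $\pi_1(f_n)=\pi_2(f_n)$ for every $n$, uniqueness of limits gives $\pi_1(f)=\pi_2(f)$ at once. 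You instead transfer surjectivity pointwise: fix $(x,y)\in f$, approximate by $(x_n,y_n)\in f_n$, use surjectivity of $f_n$ to produce $(w_n,x_n)\in f_n$, push back into $f$, and extract a convergent subsequence. This works, but note that your main worry --- confining the $w_n$ to a compact set --- has a much simpler resolution than the one you give: the approximating pairs $(w_n',x_n')$ lie in $f$ by construction, so $w_n'\in\pi_1(f)$, which is compact because $f$ is compact and $\pi_1$ is continuous; the detour through Hausdorff convergence of the $M(f_n)$ and total boundedness of their union is unnecessary. What the paper's route buys is precisely the avoidance of all this element-chasing: at the level of hyperspaces, the surjectivity condition $\pi_1(f)=\pi_2(f)$ is visibly a closed condition (the equalizer of two continuous maps into the metric space $\K(\V)$), so no points, subsequences, or compactness arguments ever need to be handled by hand.
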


\begin{proof}
Since $\V^2$ is complete we have that $(\K(\V^2),\D)$ is complete (Proposition \ref{propHyperCompleto}). 
It remains to show that $\DS(\V)$ is closed in $\K(\V^2)$.
Suppose that $f_n$ is a sequence in $\DS(\V)$ converging to $f\in\K(\V^2)$. 
Since the projections $\pi_i$ are continuous, $\pi_i(f_n)$ converges to $\pi_i(f)$. 
Therefore, $\pi_1(f)=\pi_2(f)$ and $f\in\DS(\V)$.
\end{proof}

%
%
%

The next result 
is known in the space of homeomorphisms of a fixed Cantor space \cites{Sh89,Kimura}.

\begin{prop}
\label{propExpHomDens}
If $(\V,\dist)$ is complete and perfect then 
the set of subshifts of finite type on Cantor spaces is
dense in $\DS(\V)$.
\end{prop}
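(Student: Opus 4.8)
The plan is to reduce to finite relations and then thicken each one into a genuine subshift of finite type. By Proposition \ref{propFinRelDense} the finite relations are dense, so fixing $\epsilon>0$ and a finite relation $f_0\subset Q\times Q$ with $Q\subset\V$ finite, it suffices to build a subshift of finite type $F$ on a Cantor space with $\D(F,f_0)\le\epsilon$; combined with $\D(f,f_0)<\epsilon$ this gives $\D(f,F)<2\epsilon$. I regard $f_0$ as the adjacency relation of a directed graph $G$ on the vertex set $Q$, with an edge $q\to q'$ exactly when $(q,q')\in f_0$. The condition $\pi_1(f_0)=\pi_2(f_0)=Q$ defining $\DS(\V)$ translates into $G$ having neither sources nor sinks, which is precisely what will let every vertex, and hence every edge, sit on a bi-infinite path. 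The main obstacle is that the obvious candidate, the two-sided vertex shift $X_G\subset Q^{\Z}$, need not be a Cantor space at all: if $f_0$ is a bijection then $X_G$ is a finite union of periodic orbits.

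To overcome this I would introduce a free binary coordinate. Let $G'$ be the graph on $Q\times\{0,1\}$ with an edge $(q,a)\to(q',b)$ for all $a,b\in\{0,1\}$ whenever $(q,q')\in f_0$. Then $G'$ inherits from $G$ the absence of sources and sinks, so its two-sided vertex shift $X_{G'}$ is nonempty and every edge of $G'$ lies on a bi-infinite path. Because the transition rule ignores the binary labels, flipping any far-away label of a point of $X_{G'}$ keeps the sequence in $X_{G'}$; hence no point is isolated, and being compact and totally disconnected, $X_{G'}$ is a Cantor space on which the shift $\sigma$ is a subshift of finite type.

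Next I would embed $X_{G'}$ into $\V$ near the symbols. The cylinders $[v]_0=\{x\in X_{G'}:x_0=v\}$, indexed by the vertices $v=(q,a)$, form a finite clopen partition of $X_{G'}$ into Cantor spaces. Since $\V$ is perfect I can first choose pairwise disjoint nonempty open sets $O_v\subset B_\epsilon(q)$: pick distinct points $p_v\in B_\epsilon(q)$ greedily (each ball being infinite) and take small disjoint balls around them. Inside each $O_v$, Proposition \ref{propCantorChico} gives a Cantor space $C_v$, automatically pairwise disjoint, and the same proposition lets me fix a homeomorphism of $[v]_0$ onto $C_v$. These assemble into a homeomorphism $\Phi$ of $X_{G'}$ onto the Cantor space $C=\bigcup_v C_v\subset\V$, satisfying $\dist(\Phi(x),q)<\epsilon$ whenever $x_0=(q,a)$. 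Setting $F=\{(\Phi(x),\Phi(\sigma x)):x\in X_{G'}\}$, the graph of the conjugated shift $\Phi\sigma\Phi^{-1}$, yields a homeomorphism of $C$, hence an element of $\DS_{CH}(\V)$ that is a subshift of finite type on a Cantor space.

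Finally I would verify $\D(F,f_0)\le\epsilon$ through Remark \ref{rmkDistDS}. For a point $(\Phi(x),\Phi(\sigma x))\in F$ with $x_0=(q,a)$ and $x_1=(q',a')$, the edge $(q,a)\to(q',a')$ of $G'$ forces $(q,q')\in f_0$, while $\Phi(x)\in B_\epsilon(q)$ and $\Phi(\sigma x)\in B_\epsilon(q')$; thus this point is $\epsilon$-close to $(q,q')\in f_0$. Conversely each $(q,q')\in f_0$ is an edge of $G$, hence of $G'$ via $(q,0)\to(q',0)$, and extends to a bi-infinite path, producing a point of $F$ that is $\epsilon$-close to $(q,q')$. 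Hence $\D(F,f_0)\le\epsilon$, and since $\epsilon$ and $f$ are arbitrary, the subshifts of finite type on Cantor spaces are dense in $\DS(\V)$.
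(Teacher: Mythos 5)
Your proof is correct and follows essentially the same route as the paper: reduce to a finite relation via Proposition \ref{propFinRelDense}, pass to the associated vertex shift made perfect by a free binary coordinate (your $X_{G'}$ is exactly the paper's $\Sigma_*=\Sigma\times\{0,1\}^\Z$ under the natural identification), embed it cylinder-by-cylinder into the $\epsilon$-balls around the symbols using Proposition \ref{propCantorChico}, and conjugate the shift by this embedding. Your write-up is in fact slightly more careful than the paper's on the minor points (disjointness of the images, nonemptiness and perfectness of the cylinders, and the final Hausdorff-distance estimate via Remark \ref{rmkDistDS}), but the underlying argument is the same.
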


\begin{proof}
Given $f\in\DS(\V)$, by Proposition \ref{propFinRelDense}, we know that there is a finite relation 
$g\in\DS(\V)$ that is $\D$-close to $f$.
Suppose that $M(g)=\{q_1,\dots,q_k\}$.
Let $A$ be the $k\times k$ matrix defined by 
$A_{ij}=1$ if $(q_i,q_j)\in g$ and $A_{ij}=0$ otherwise. 
This matrix induces a subshift of finite type in the set of symbols $Q=\{q_1,\dots,q_k\}$. 
Let $\Sigma=\{a\in Q^\Z: (a_i,a_{i+1})\in g\text{ for all }i\in\Z\}$. 
Since $\Sigma$ may not be perfect, consider $\Sigma_*=\Sigma\times\{0,1\}^\Z$.
For each $q\in Q$ define the Cantor space $\Sigma_q=\{(a,b)\in\Sigma_*:a_0=q\}$. 

Suppose $\epsilon>0$ is given. By Proposition \ref{propCantorChico}, for each $q\in Q$ there is a homeomorphism onto its image 
$\varphi_q\colon \Sigma_*\to\V$ 
such that $\varphi(\Sigma_q)\subset B_\epsilon(q)$. 
Define $N=\varphi(\Sigma_*)$ and 
the homeomorphism $h\colon N\to N$ as $h=\varphi \sigma \varphi^{-1}$, 
where $\sigma\colon \Sigma_*\to\Sigma_*$ is the shift map.
It is clear that $M(h)=N$ and $D(h,g)<\epsilon$. 
Since $g$ is conjugate to the subshift of finite type $\sigma$ on $\Sigma_*$ the proof ends.
\end{proof}

\begin{rmk}
Since subshifts of finite type are homeomorphisms, by Proposition \ref{propExpHomDens} we have that 
if $(\V,\dist)$ is complete and perfect then homeomorphisms are dense in $\DS(\V)$. 
Thus, by Proposition \ref{propDSComplete}, we have that the space of set valued dynamical systems is the completion of 
the space of homeomorphisms (with respect to $\D$).
\end{rmk}

\begin{prop}
\label{propGenCantorHomeo}
If $(\V,\dist)$ is complete and perfect then 
$\DS_{CH}(\V)$ is a dense $G_\delta$ subset of $\DS(\V)$.
\end{prop}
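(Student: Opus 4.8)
The plan is to realize $\DS_{CH}(\V)$ as the intersection of three $G_\delta$ sets and to get density for free from Proposition \ref{propExpHomDens}. Set $\mathcal F=\{f\in\DS(\V):f\ \text{is single-valued}\}$, let $\mathcal F^{-1}=\{f:f^{-1}\ \text{is single-valued}\}$ (the injective relations), and let $\mathcal C\subset\K(\V)$ be the family of Cantor spaces. First I would check that $\DS_{CH}(\V)=\mathcal F\cap\mathcal F^{-1}\cap M^{-1}(\mathcal C)$, where $M(f)=\pi_1(f)$: if $f$ is single-valued it is a continuous surjection of $M(f)$ by Proposition \ref{propCharFun}; if moreover $f^{-1}$ is single-valued then $f$ is an injective continuous self-map of the compact set $M(f)$, hence a homeomorphism; and requiring $M(f)\in\mathcal C$ exactly says this compact set is a Cantor space. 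Since $M$ is $1$-Lipschitz by Remark \ref{rmkDistsChotas}, it is continuous, so $M^{-1}(\mathcal C)$ is $G_\delta$ as soon as $\mathcal C$ is $G_\delta$ in $\K(\V)$; and since $f\mapsto f^{-1}$ is an isometry of $(\DS(\V),\D)$, $\mathcal F^{-1}$ is $G_\delta$ once $\mathcal F$ is. Thus it remains to prove that $\mathcal F$ is $G_\delta$ in $\DS(\V)$ and that $\mathcal C$ is $G_\delta$ in $\K(\V)$.

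For $\mathcal F$ the cleanest route is through its complement. For $k\in\N$ put $D_k=\{f\in\DS(\V):\exists\,(x,y),(x,y')\in f\ \text{with}\ \dist(y,y')\ge 1/k\}$, so that $\DS(\V)\setminus\mathcal F=\bigcup_k D_k$. I would show each $D_k$ is closed, which gives $\mathcal F=\bigcap_k(\DS(\V)\setminus D_k)$ as a $G_\delta$. If $f_j\to f$ with $f_j\in D_k$, pick witnesses $(x_j,y_j),(x_j,y_j')\in f_j$ sharing the first coordinate and with $\dist(y_j,y_j')\ge 1/k$; since $\V^2$ is complete and $(f_j)$ converges in $\K(\V^2)$, the set $\overline{\bigcup_j f_j\cup f}$ is compact, so after passing to a subsequence $x_j\to x$, $y_j\to y$, $y_j'\to y'$. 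Limits of points of $f_j$ lie in $f$, whence $(x,y),(x,y')\in f$ and $\dist(y,y')\ge 1/k$, i.e.\ $f\in D_k$.

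For $\mathcal C$ I would treat total disconnectedness and perfectness separately. Writing $T_n=\{X\in\K(\V):X\ \text{admits a finite clopen partition into pieces of diameter}<1/n\}$, one has that $X$ is totally disconnected if and only if $X\in\bigcap_nT_n$, and $T_n$ is open: a finite clopen partition of $X$ has pieces mutually at some positive distance $\rho$ and of diameters bounded by some $d<1/n$, and for $Y$ with $\eta:=\dist_H(X,Y)$ smaller than both $\rho/2$ and $(1/n-d)/2$ one transports the partition by assigning each $y\in Y$ to the unique piece at distance $<\eta$ from it, obtaining a clopen partition of $Y$ of mesh $<1/n$. For perfectness I would again pass to the complement: let $I_m=\{X\in\K(\V):\exists\,x\in X\ \text{with}\ \dist(x,x')\ge 1/m\ \text{for all}\ x'\in X\setminus\{x\}\}$ be the compacta having a $(1/m)$-isolated point, so the non-perfect compacta form $\bigcup_m I_m$. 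Each $I_m$ is closed by the same subsequence argument as for $D_k$: from $X_j\to X$ and $(1/m)$-isolated witnesses $x_j\in X_j$ one extracts $x_j\to x\in X$, and any $x'\in X\setminus\{x\}$ is a limit of points $x_j'\in X_j$ that are eventually distinct from $x_j$, forcing $\dist(x_j,x_j')\ge 1/m$ and hence $\dist(x,x')\ge 1/m$. Therefore the perfect compacta form a $G_\delta$, and $\mathcal C=(\bigcap_nT_n)\cap(\K(\V)\setminus\bigcup_mI_m)$ is $G_\delta$.

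Finally, density is immediate: by Proposition \ref{propExpHomDens} the subshifts of finite type on Cantor spaces are dense in $\DS(\V)$, and these belong to $\DS_{CH}(\V)$, so $\DS_{CH}(\V)$ is a dense $G_\delta$. The main obstacle is not any single estimate but arranging the quantifier structure so that every piece is genuinely $G_\delta$ rather than $F_\sigma$; the decisive point is to describe the bad sets (non-functions, and compacta with an isolated point) as countable unions of closed sets, whose closedness rests on the fact that a convergent sequence of compacta in the complete space $\V^2$ has relatively compact union, allowing the witness points to be passed to the limit.
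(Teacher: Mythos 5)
Your proof is correct and follows the same overall architecture as the paper's: both express $\DS_{CH}(\V)$ as a countable intersection of open dense sets corresponding to the four conditions---single-valuedness of $f$, single-valuedness of $f^{-1}$, total disconnectedness of the domain, perfectness of the domain---and both obtain density of all of these open sets in one stroke from Proposition \ref{propExpHomDens}. Your closedness arguments for $D_k$ and $I_m$ (pass witness points to a limit inside the relatively compact union of a Hausdorff-convergent sequence of compacta) are the same limit arguments the paper runs for its sets $A_\epsilon$ and $F_\epsilon$, and your use of the isometry $f\mapsto f^{-1}$ plays exactly the role of the paper's requirement that both $f$ and $f^{-1}$ lie in $A_\epsilon$. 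You diverge in two places, both to your advantage. First, you route the domain conditions through the $1$-Lipschitz (hence continuous) map $M\colon\DS(\V)\to\K(\V)$ of Remark \ref{rmkDistsChotas}, so the Cantor-space conditions are verified once in $\K(\V)$ and pulled back; the paper instead works directly in $\DS(\V)$. Second, and more substantively, for total disconnectedness the paper uses the functional $\|M\|=\sup_{x}\diam(\comp_x(M))$ and dismisses the openness of $\{f:\|M(f)\|<\epsilon\}$ as ``easy to see''; your sets $T_n$, defined by the existence of a fine finite clopen partition, come with an actual openness proof (transporting the partition to any Hausdorff-nearby compactum), and that transport argument is essentially the justification the paper omits. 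One cosmetic repair: since $\dist_H$ is an infimum, a point of $Y$ is only guaranteed to be within distance $\leq\eta$ (not $<\eta$) of $X$ when $\eta=\dist_H(X,Y)$, so the transport should be run with a threshold $\eta'$ satisfying $\eta<\eta'<\min\{\rho/2,(1/n-d)/2\}$; this changes nothing else in your argument.
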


\begin{proof}
We start showing that the set 
\[
 \{f\in\DS(\V):f\text{ is a homeomorphism}\}
\]
is a dense $G_\delta$ subset of $\DS(\V)$.
For $\epsilon>0$ given define 
$$ A_\epsilon=\{f\in\DS(\V):\diam(f(x))<\epsilon,\,\forall x\in M(f)\}.$$ 
By Proposition \ref{propExpHomDens} we know that homeomorphisms are dense in $\DS(\V)$. 
In particular, for each $\epsilon>0$, $ A_\epsilon$ is dense in $\DS(\V)$. 
Let us show that $\DS(\V)\setminus A_\epsilon$ is closed. 
Suppose that $f_n$ is a convergent sequence in $\DS(\V)$ such that 
for some $x_n\in M(f_n)$ we have $\diam(f_n(x_n))\geq\epsilon$ for all $n\in\N$. 
Let $f$ be the limit of $f_n$.
The limit (in the Hausdorff metric) of $f_n(x_n)$ is contained in $f(x)$, and we conclude that $\diam(f(x))\geq \epsilon$. 
This proves that $ A_\epsilon$ is open. 
By Proposition \ref{propCharFun} we know that $f\in\DS(\V)$ is a homeomorphism if and only if $f,f^{-1}\in A_\epsilon$ for all $\epsilon>0$. 
For $n\in\N$, let $U_n$ be the open and dense set of dynamical systems $f$ such that $f,f^{-1}\in A_{1/n}$. 
The set $\cap_{n\in\N}U_n$ is the set of the homeomorphisms $f\in\DS(\V)$.

To conclude the proof we will show that the set
\begin{equation}
\{f\in\DS(\V):M(f)\text{ is a Cantor space}\}  
\end{equation}
is a dense $G_\delta$ subset of $\DS(\V)$.
Given $M\in\K(\V)$ define 
$$\|M\|=\sup_{x\in M} \diam(\comp_x(M)),$$
where $\comp_x(M)$ denotes the connected component of $M$ containing $x$.
For $\epsilon>0$ let $$ A_\epsilon=\{f\in\DS(\V):\|M(f)\|<\epsilon\}.$$
It is easy to see that $ A_\epsilon$ is open in $\DS(\V)$, for each $\epsilon>0$. 
From Proposition \ref{propExpHomDens}
we have that $ A_\epsilon$ is dense in $\DS(\V)$.
Define 
\[
  A_0=\{f\in\DS(\V):\|M(f)\|=0\}.
\]
That is, $f\in A_0$ if and only if $M(f)$ is totally disconnected.
Since $A_0=\cap_{n\geq 1} A_{1/n}$ we have that $ A_0$ is a dense $G_\delta$ set. 

By Proposition \ref{propExpHomDens}
we know that homeomorphisms on perfect sets 
are dense in 
$\DS(\V)$. 
For $\epsilon>0$ define
\[
  F_\epsilon=\{f\in\DS(\V): \exists x\in M(f)\text{ s.t. } B_\epsilon(x)=\{x\}\},
\]
where $B_\epsilon(x)$ is the open ball in $M(f)$. Also consider
\[
 F_0=\{f\in\DS(\V):M(f)\text{ is perfect}\}.
\]
It is easy to see that $ F_\epsilon$ is closed in $\DS(\V)$. 
Consequently, $\DS(\V)\setminus F_\epsilon$ is open and dense in $\DS(\V)$. 
Since 
\[
 F_0=\cap_{n\geq 1}(\DS(\V)\setminus F_{1/n})
\]
we have that $F_0$ is a dense $G_\delta$ set. 

Therefore $F_0\cap A_0$ is a dense $G_\delta$ set. 
Note that $f\in F_0\cap A_0$ if and only if $M(f)$ is a Cantor space.
\end{proof}

\subsection{The Special Homeomorphism}
\label{secSH}

In this section we assume that $(\V,\dist)$ is Polish and perfect. 

\begin{rmk}
 Let $K$ be a Cantor set and $(\V,\dist)$ Polish and perfect. 
 Define 
 \[
  \Emb(K,\V)=\{h\in C(K,\V):f\text{ is injective}\}.
 \]
Consider $\U_n$ a sequence of clopen partitions of $K$ such that $\mesh(\U_n)\to 0$ and let 
\[
 U_n=\{h\in C(K,\V):h(A)\cap h(B)=\emptyset\text{ if }A,B\in \U_n, A\neq B\}.
\]
It is easy to see that each $U_n$ is open (and dense), and that 
$\cap_{n\in\N}U_n=\Emb(K,\V)$. 
Consequently, $\Emb(K,\V)$ is a $G_\delta$ subset of $C(K,\V)$, and by Theorem \ref{thmAlex} it is a Polish space.
\end{rmk}

\begin{thm}
\label{thmGenDSSH} 
If $(\V,\dist)$ is Polish and perfect then 
\[
\DS^{SH}(\V)= \{f\in\DS(\V): f\text{ is conjugate to the Special Homeomorphism}\} 
\]
is a dense $G_\delta$ subset of $\DS(\V)$.
\end{thm}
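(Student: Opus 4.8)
The plan is to push the problem into the subspace of Cantor homeomorphisms and then invoke Corollary \ref{corTopFubini} for the conjugation map sketched in the introduction. Since the Special Homeomorphism acts on a Cantor space and topological conjugacy preserves this, we have $\DS^{SH}(\V)\subseteq\DS_{CH}(\V)$. By Proposition \ref{propGenCantorHomeo} the set $\DS_{CH}(\V)$ is a dense $G_\delta$ subset of $\DS(\V)$, hence Polish by Theorem \ref{thmAlex}; and a dense $G_\delta$ subset of a dense $G_\delta$ subset of $\DS(\V)$ is again a dense $G_\delta$ subset of $\DS(\V)$ (the $G_\delta$ property is transitive, since an intersection of two $G_\delta$ sets is $G_\delta$, and density is transitive). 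So it is enough to prove that $\DS^{SH}(\V)$ is a dense $G_\delta$ subset of $\DS_{CH}(\V)$.

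To this end I would fix a Cantor space $K\subset\V$ and study $\varphi\colon\Emb(K,\V)\times H(K)\to\DS_{CH}(\V)$ given by $\varphi(h,f)=hfh^{-1}$. Its domain is Polish ($\Emb(K,\V)$ is Polish by the Remark preceding the theorem, $H(K)$ is Polish, and products of Polish spaces are Polish), and $\varphi$ is onto: any $g\in\DS_{CH}(\V)$ is a homeomorphism of some Cantor space $M$, so picking a homeomorphism $h_0\colon K\to M$ via Proposition \ref{propCantorChico} gives $\varphi(h_0,h_0^{-1}gh_0)=g$. Letting $\G\subset H(K)$ be the homeomorphisms conjugate to the Special Homeomorphism, the results of Kechris and Rosendal \cite{KeRo} and of Akin, Glasner and Weiss \cite{AGW} say that $\G$ is a dense $G_\delta$ subset of $H(K)$, so $R=\Emb(K,\V)\times\G$ is a dense $G_\delta$ subset of the domain. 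Since conjugacy is an equivalence relation, $hfh^{-1}$ is conjugate to the Special Homeomorphism exactly when $f$ is; hence $\varphi(R)=\DS^{SH}(\V)$ and $R=\varphi^{-1}(\varphi(R))$. Once I know that $\varphi$ is a continuous open surjection, Corollary \ref{corTopFubini} yields immediately that $\varphi(R)=\DS^{SH}(\V)$ is a dense $G_\delta$ subset of $\DS_{CH}(\V)$, finishing the argument.

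Continuity of $\varphi$ should be a routine estimate via Remark \ref{rmkDistDS}: the graph of $\varphi(h,f)$ is $\{(h(x),hf(x)):x\in K\}$, and if $(h_n,f_n)\to(h,f)$ then $\dist(h_n(x),h(x))$ and, using the uniform continuity of $h$ on the compact set $K$, also $\dist(h_nf_n(x),hf(x))$ tend to $0$ uniformly in $x$, giving $\D(\varphi(h_n,f_n),\varphi(h,f))\to0$.

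The hard part will be the openness of $\varphi$, and here I would lean on Proposition \ref{propCantorCercaDos}. Fix $(h,f)$, put $g=\varphi(h,f)$ and $M=M(g)=h(K)$, and suppose $j\in\DS_{CH}(\V)$ satisfies $\D(g,j)<\delta$. Proposition \ref{propCantorCercaDos} yields homeomorphisms $h_1,h_2\colon M\to M(j)$ with $\|h_1\|,\|h_2\|<\delta$ and $h_2g=jh_1$. I would set $h'=h_1h\in\Emb(K,\V)$ and $f'=h'^{-1}jh'\in H(K)$, so that $\varphi(h',f')=j$ by construction. Then $\dist(h'(x),h(x))=\dist(h_1(h(x)),h(x))<\delta$, so $h'$ is $\delta$-close to $h$; and since $\|h_1^{-1}\|<\delta$ as well, the map $h_1^{-1}h_2\colon M\to M$ moves every point less than $2\delta$. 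Using $h_2g=jh_1$ and $g=hfh^{-1}$ one computes $f'=(h^{-1}h_1^{-1}h_2h)f$, and because $h^{-1}$ is uniformly continuous the factor $h^{-1}h_1^{-1}h_2h$ is uniformly close to $\mathrm{id}_K$ once $\delta$ is small, so $\dist_{C^0}(f',f)$ is small. Thus, choosing $\delta$ small in terms of a prescribed $\epsilon$ and the moduli of continuity of $h$ and $h^{-1}$, every $j$ in a $\D$-ball about $g$ is the image under $\varphi$ of a point in the $\epsilon$-neighborhood of $(h,f)$, which is exactly what openness requires. I expect this estimate to be the only genuinely delicate point; everything else is bookkeeping around Corollary \ref{corTopFubini}.
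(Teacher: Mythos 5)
Your proposal is correct and follows essentially the same route as the paper's own proof: the same conjugation map $\varphi(h,f)=hfh^{-1}$, the same dense $G_\delta$ set $R=\Emb(K,\V)\times\G$ fed into Corollary \ref{corTopFubini}, and the same openness argument via Proposition \ref{propCantorCercaDos} together with the uniform continuity of $h^{-1}$. You merely spell out some steps the paper leaves implicit (surjectivity and continuity of $\varphi$, Polishness of $\DS_{CH}(\V)$ via Theorem \ref{thmAlex}, and the transfer of the dense $G_\delta$ property from $\DS_{CH}(\V)$ to $\DS(\V)$), which is fine.
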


\begin{proof}
Let $K\subset\V$ be a fixed Cantor space.
Consider the map 
$$\varphi\colon \Emb(K,\V)\times H(K)\to\DS_{CH}(\V)$$ 
defined as $\varphi(h,f)=h f h^{-1}.$ 
Let $$\G=\{f\in H(K):f\text{ is conjugate to the Special Homeomorphism}\}.$$
Define 
\[
 R=\Emb(K,\V)\times \G.
\]
By \cites{KeRo,AGW}, we have that $R$ is a dense $G_\delta$ subset of $\Emb(K,\V)\times H(K)$. 
Notice that $\varphi(R)=\DS^{SH}(\V)$ and $\varphi^{-1}(\varphi(R))=R$.
Therefore, in order to finish the proof we need to check the remaining hypothesis of Corollary \ref{corTopFubini}.

It is clear that $\varphi$ is continuous and surjective. 
We will show that $\varphi$ is open. 
For this purpose, fix $h\in\Emb(K,\V)$ and $f\in H(K)$. 
Let $U\subset \Emb(K,\V)\times H(K)$ be an open set containing $(h,f)$ and take $\epsilon>0$ such that 
if $\dist_{C^0}(f,f')<\epsilon$ and $\dist_{C^0}(h,h')<\epsilon$ then $(h',f')\in U$. 
Let $g\colon h(K)\to h(K)$ be the homeomorphism $g=\varphi(h,f)$.
As $h^{-1}$ is uniformly continuous, there is $\delta>0$ such that 
\begin{equation}
\label{ecuCota}
  \text{if }x,y\in h(K), \dist(x,y)<2\delta\text{ then }\dist(h^{-1}(x),h^{-1}(y))<\epsilon.
\end{equation}
Suppose that $j\in\DS(\V)$ is a homeomorphism of a Cantor space 
such that $\D(g,j)<\delta$. 
By Proposition \ref{propCantorCercaDos}, consider $h_1,h_2$ with
$\|l_i\|<\delta$. 
Let $h'=h_1  h\in\Emb(K,\V)$ and $f'=h^{-1}  h_1^{-1}  j  h_1  h\in H(K)$.
Since $\|h_1\|<\delta$ we have that $\dist_{C^0}(h,h')<\delta$. 
Also
\[
 \dist_{C^0}(g,h_1^{-1}jh_1)=\dist_{C^0}(g,h_1^{-1}h_2g)<2\delta
\]
and 
\[
 \dist_{C^0}(gh,h_1^{-1}jh_1h)<2\delta.
\]
This and \eqref{ecuCota} implies that 
\[
 \dist_{C^0}(h^{-1}gh,h^{-1}h_1^{-1}jh_1h)<\epsilon
\]
and $\dist_{C^0}(f,f')<\epsilon.$	
Thus, $(h',f')\in U$. Since 
$\varphi(h',f')=j$ we conclude that $\varphi$ is open and the proof ends.
%
\end{proof}

The next result is an application of the densitity of the conjugacy class 
of the Special Homeomorphism. This kind of result was suggested to the author by Mauricio Achigar.

\begin{cor}
\label{corMani}
 If $f\colon M\to M$ is a homeomorphism of a compact manifold $(M,\dist)$ then 
 for all $\epsilon>0$ there are an $\epsilon$-dense Cantor set $K\subset M$ and a 
 homeomorphism $g\colon K\to K$ conjugate to the Special Homeomorphism 
 such that $\dist_{C^0}(g,f|_K)<\epsilon$. 
\end{cor}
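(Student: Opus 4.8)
The plan is to apply Theorem \ref{thmGenDSSH} with the ambient space taken to be the manifold itself, i.e. $\V=M$. First I would note that a compact manifold $(M,\dist)$ of positive dimension is a compact metric space, hence Polish, and has no isolated points, hence is perfect; so $M$ satisfies the hypotheses of Theorem \ref{thmGenDSSH}. The given homeomorphism $f\colon M\to M$ has compact graph in $M\times M$ with $\pi_1(f)=\pi_2(f)=M$, so that $f\in\DS(M)$ with $M(f)=M$.

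Next, using the uniform continuity of $f$ on the compact space $M$, I would fix $\delta\in(0,\epsilon)$ small enough that $\dist(x,x')<\delta$ implies $\dist(f(x),f(x'))<\epsilon-\delta$. By Theorem \ref{thmGenDSSH} the set $\DS^{SH}(M)$ is dense in $\DS(M)$, so there exists $g\in\DS^{SH}(M)$ with $\D(f,g)<\delta$. Since $g$ is conjugate to the Special Homeomorphism it is a homeomorphism of a Cantor space; I would set $K=M(g)$, a Cantor space contained in $M$, so that $g\colon K\to K$ is a homeomorphism conjugate to the Special Homeomorphism.

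It remains to verify the two quantitative conclusions. For the density of $K$ I would invoke Remark \ref{rmkDistsChotas}: since $\dist_H(M,K)=\dist_H(M(f),M(g))\leq\D(f,g)<\delta<\epsilon$, we get $M\subseteq B_\epsilon(K)$, i.e. $K$ is $\epsilon$-dense in $M$. For the $C^0$-estimate, fix $x\in K$; then $(x,g(x))\in g$, and by Remark \ref{rmkDistDS} there is $(x',f(x'))\in f$ with $\dist(x,x')<\delta$ and $\dist(g(x),f(x'))<\delta$. The triangle inequality and the choice of $\delta$ then give
\[
\dist(g(x),f(x))\leq\dist(g(x),f(x'))+\dist(f(x'),f(x))<\delta+(\epsilon-\delta)=\epsilon,
\]
and taking the supremum over $x\in K$ yields $\dist_{C^0}(g,f|_K)<\epsilon$, as desired.

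The verification of the hypotheses and the density statement are routine. The one step demanding care — and which I regard as the main, if modest, obstacle — is the $C^0$-estimate: smallness of $\D(f,g)$ only provides, for each point $(x,g(x))$ of $g$, some nearby point $(x',f(x'))$ of the graph of $f$, and \emph{not} a direct bound on $\dist(g(x),f(x))$. Bridging the gap between $f(x')$ and $f(x)$ is precisely where the uniform continuity of $f$ enters, which is why $\delta$ must be chosen relative to the modulus of continuity of $f$ and not merely relative to $\epsilon$.
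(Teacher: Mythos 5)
Your proposal is correct and follows essentially the same route as the paper: apply Theorem \ref{thmGenDSSH} with $\V=M$, take $K=M(g)$ for a nearby $g\in\DS^{SH}(M)$, get $\epsilon$-density from Remark \ref{rmkDistsChotas}, and close the $C^0$-estimate via the triangle inequality plus uniform continuity of $f$ (the paper simply fixes the modulus as $\epsilon/2$ with $\delta<\epsilon/2$ rather than your $\epsilon-\delta$). The step you flag as the key obstacle is indeed exactly where the paper also invokes uniform continuity, so there is nothing to change.
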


\begin{proof}
Given $\epsilon>0$, take $\delta\in(0,\epsilon/2)$ such that if 
$x,x'\in M$ and $\dist(x,x')<\delta$ then $\dist(f(x),f(x'))<\epsilon/2$. 
Applying Theorem \ref{thmGenDSSH} with $\V=M$, we have that there is $g\in\DS(M)$ conjugate to the Special Homeomorphism with 
 $\D(f,g)<\delta$. 
 Let $K=M(g)$. 
 Then, Remark \ref{rmkDistsChotas}, 
 $\dist_H(K,M)<\D(f,g)<\delta<\epsilon$, where $M=M(f)$, i.e., $K$ is $\epsilon$-dense in $M$. 
 
Since $\D(f,g)<\delta$, for all $x\in K$ there is $x'\in M$ such that 
$\dist(x,x')<\delta$ and $\dist(g(x),f(x'))<\delta$. 
By the uniform continuity of $f$, we have that $\dist(f(x),f(x'))<\epsilon/2$. 
Thus, 
$$
\dist(g(x),f(x))\leq\dist(g(x),f(x'))+\dist(f(x'),f(x))<\delta+\epsilon/2<\epsilon
$$
and $\dist_{C^0}(f,g)<\epsilon$.
\end{proof}

\section{Dynamics modulo isometric conjugacies}
\label{secDynGH}
In this section we study the equivalence relation of isometric conjugacy 
between dynamical systems in $\DS(\V)$. 
In \S\ref{secQuotSD} we apply Theorem \ref{thmGenDSSH} to conclude that the Special Homeomorphism 
is also generic in this setting.
In \S\ref{secAM} we show that our approach is topologically equivalent to \cite{AM}.

\subsection{Metric spaces}
\label{secSS}

Given two metric spaces $(X,\dist^X)$ and $(Y,\dist^Y)$ a surjective function 
$\varphi\colon X\to Y$ is an \emph{isometry} 
if $\dist^Y(\varphi(x),\varphi(x'))=\dist^X(x,x')$ for all 
$x,x'\in X$. 
In this case we write $X\sim Y$ and we say that $Y$ is an \emph{isometric copy} of $X$. 
We say that a metric space $(X,\dist)$ is \emph{homogeneous} if for all $x,y\in X$ there is an isometry $\varphi\colon M\to M$ such that $y=\varphi(x)$.

A metric space is \emph{universal} if it contains an isometric copy of every 
separable
metric space. 
An example of a universal space is $\F=l^\infty$, the Banach space 
of bounded real sequences $a\colon \N\to \R$ with the sup-norm
$\|a\|_\infty=\sup_{n\in\N}|a_n|$.
Indeed, given a separable metric space $(M,\rho)$ with a dense sequence $\{c_n\}_{n\in\N}$ 
consider $\varphi\colon M\to\F$ as $\varphi(x)=a$ where $a_n=\rho(x,c_n)-\rho(c_n,c_1)$. 
The set $\varphi(M)\subset \F$ is an isometric copy of $M$.
The function $\varphi$ is known as a \emph{Fr\'echet embedding}.
Since the translations of $\F$ are isometries, we have that $\F$ is homogeneous.
The space $\F$ is complete but not separable. 
Another classical universal space is 
the Banach space of continuous functions from $[0,1]$ to $\R$ with the sup-norm, see \cite{Heinonen}.


%

A metric space $(\V,\dist)$ is \emph{ultrahomogeneous} if any isometry $\varphi\colon A\to B$ between two
compact subsets of $\V$, extends to an isometry $\varphi'\colon \Ur\to\Ur$.
We remark that Euclidean $\R^n$ is not universal but it is Polish, perfect and ultrahomogeneous (see \cite{Me08}*{Corollary 1}).
A metric space is \emph{Urysohn universal} if it is 
ultrahomogeneous, universal and Polish. 
The next result is due to Urysohn.

\begin{thm}[\cites{Heinonen,Gro}]
Up to isometry,
there is a unique Urysohn universal space. 
\end{thm}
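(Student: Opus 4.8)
The plan is to reduce the statement to a purely finite extension property and then run a back-and-forth argument between any two Urysohn universal spaces. Write $\mathbb U_1$ and $\mathbb U_2$ for two spaces that are each ultrahomogeneous, universal and Polish; I must produce a global isometry $\mathbb U_1\to\mathbb U_2$. Separability of both spaces lets me fix countable dense sequences and build the isometry on them stage by stage, while completeness lets me pass to the limit.

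The key lemma is the following \emph{one-point extension property}: if $\mathbb U$ is Urysohn universal, $A\subset\mathbb U$ is finite, and $A\cup\{p\}$ carries a metric extending that of $A$, then there is $q\in\mathbb U$ with $\dist(q,a)=\dist(p,a)$ for all $a\in A$. To prove this I would use universality to embed the finite (hence separable) metric space $A\cup\{p\}$ isometrically into $\mathbb U$, obtaining $A'\cup\{q'\}$ with $A'$ an isometric copy of $A$. The restriction of this embedding is an isometry between the compact sets $A'$ and $A$, so ultrahomogeneity extends its inverse to a global isometry $\Psi\colon\mathbb U\to\mathbb U$ with $\Psi(A')=A$; then $q=\Psi(q')$ realizes the prescribed distances, since $\Psi$ preserves all distances. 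This is the step where the two global hypotheses (universality and ultrahomogeneity) get converted into the local data needed for the induction, and I expect it to be the main obstacle — everything afterward is bookkeeping.

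With the lemma in hand I would run the standard back-and-forth. Fix dense sequences $\{x_n\}\subset\mathbb U_1$ and $\{y_n\}\subset\mathbb U_2$ and construct an increasing chain of finite distance-preserving bijections $f_k\colon F_k\to G_k$ with $F_k\subset\mathbb U_1$ and $G_k\subset\mathbb U_2$. At an odd stage I add to the domain the first $x_n$ not yet in $F_k$ and apply the lemma in $\mathbb U_2$ to find an image realizing the distances from $x_n$ to $F_k$; at an even stage I symmetrically add to the range the first missing $y_n$ using the lemma in $\mathbb U_1$. The union $f=\bigcup_k f_k$ is then a distance-preserving bijection between a dense subset of $\mathbb U_1$ and a dense subset of $\mathbb U_2$.

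Finally, since $f$ is uniformly continuous on a dense subset of the complete space $\mathbb U_1$, it extends uniquely to a distance-preserving map $\bar f\colon\mathbb U_1\to\mathbb U_2$; the forth steps guarantee that its image is dense, and a distance-preserving map with complete domain has closed image, so $\bar f$ is onto. Thus $\bar f$ is the desired isometry, proving $\mathbb U_1\sim\mathbb U_2$. The only delicate points to watch are the alternation ensuring that every $x_n$ and every $y_n$ is eventually captured, so that both domain and range are dense, and the consistency of the distances at each extension, which holds automatically because the points being added already live in the genuine metric spaces $\mathbb U_1$ and $\mathbb U_2$.
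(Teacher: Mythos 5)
Your proposal is correct, but there is nothing in the paper to compare it with: the paper does not prove this theorem at all, it simply imports it from the cited references (Heinonen, Gromov), where it is due to Urysohn. What you have written is essentially the classical uniqueness proof from those sources --- back-and-forth on countable dense sets, then extension by completeness --- and all the steps check out: the pushed-forward distances at each stage satisfy the triangle inequality because they are realized inside genuine metric spaces, the union of the finite partial isometries is distance-preserving between dense subsets, and the extension is onto because a distance-preserving image of a complete space is closed and contains a dense set. The one genuinely nontrivial adaptation, and the part worth highlighting, is your one-point extension lemma: the classical treatments \emph{define} the Urysohn space by the finite one-point extension property (or injectivity) and then derive ultrahomogeneity, whereas this paper defines Urysohn universality as ultrahomogeneity $+$ universality $+$ Polish, so you must travel in the opposite direction. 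Your lemma does exactly that --- embed the abstract extension $A\cup\{p\}$ by universality, then use ultrahomogeneity (applied to the finite, hence compact, set $A'$) to slide the copy of $A$ back onto $A$ --- and this is the correct bridge between the paper's definition and the standard machinery. One small point of care: the paper defines ``Polish'' as a purely topological property (existence of \emph{some} compatible complete metric), while your limit-passing step uses completeness of the given metric $\dist$ on $\mathbb U_1$ and $\mathbb U_2$; this is how the cited references set things up (they require the metric itself to be complete), so your reading matches the intended statement, but strictly speaking it deserves a sentence acknowledging that $\dist$ is being taken complete.
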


\begin{rmk}
The ultrahomogeneous condition is usually stated for $A,B$ finite sets, but from \cites{Huhu,Hol} 
the formulation given above is equivalent. See also \cite{Gro}*{Exercise (b'), p. 82}.
\end{rmk}

\emph{The} Urysohn universal space will be denoted as $(\Ur,\dist)$. 
It is clear that $\Ur$ is perfect.
The interested reader may see \cite{Nguyen}*{p. 32} for more on ultrahomogeneous spaces.

For a metric space $(\V,\dist)$, the equivalence relation of isometry $\sim$ on the space $(\K(\V),\dist_H)$ gives rise 
to a quotient space 
\[
 \K_{\Iso}(\V)=\K(\V)/\sim.
\]
For $X\in \K(\V)$ denote by $\tilde X=\{Y\in\K(\V):X\sim Y\}$ its equivalence class.
Given $X,Y\in\K(\V)$ define 
\begin{equation}
 \label{ecuGHCociente}
  \dist_{\tilde H}(\tilde X,\tilde Y)=\inf\{\dist_H(A,B):A\sim X, B\sim Y, A,B\in\K(\V)\}.  
\end{equation}
This is the so called \emph{Gromov-Hausdorff distance}. 
As we learned in \cite{Tuz}, it was first defined by Edwards \cite{Ed}.
The next result summarizes some standard properties. 

\begin{prop}[\cites{BBI,Gro,Ed}]
If $(\V,\dist)$ is complete and ultrahomogeneous then
$\dist_{\tilde H}$ is a complete metric in $\K_{\Iso}(\V)$ 
compatible with the quotient topology.
\end{prop}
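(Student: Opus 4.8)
The plan is to verify the metric axioms for $\dist_{\tilde H}$, then completeness and compatibility with the quotient topology, exploiting throughout the single structural consequence of ultrahomogeneity: if $A\sim X$ in $\K(\V)$, then an isometry $A\to X$ extends to a global isometry $\Phi\colon\V\to\V$, and since $\Phi$ preserves $\dist$ it preserves the Hausdorff metric, i.e. $\dist_H(\Phi(C),\Phi(D))=\dist_H(C,D)$ for all $C,D\in\K(\V)$. The routine facts come first: $\dist_{\tilde H}$ is well defined on classes and symmetric by construction, satisfies $\dist_{\tilde H}(\tilde X,\tilde X)=0$ (take $A=B=X$), and obeys $\dist_{\tilde H}(\tilde X,\tilde Y)\le\dist_H(X,Y)$ (take $A=X$, $B=Y$ in the infimum).

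For the triangle inequality I would fix $\epsilon>0$ and choose $A\sim X$, $B_1\sim Y$ with $\dist_H(A,B_1)<\dist_{\tilde H}(\tilde X,\tilde Y)+\epsilon$, and $B_2\sim Y$, $C\sim Z$ with $\dist_H(B_2,C)<\dist_{\tilde H}(\tilde Y,\tilde Z)+\epsilon$. Since $B_1\sim B_2$, ultrahomogeneity yields a global isometry $\Phi$ with $\Phi(B_2)=B_1$; replacing $B_2,C$ by $B_1,\Phi(C)$ leaves $\dist_H(B_2,C)$ unchanged, so all three copies $A,B_1,\Phi(C)$ now sit in $\V$ and the triangle inequality for $\dist_H$ gives $\dist_H(A,\Phi(C))<\dist_{\tilde H}(\tilde X,\tilde Y)+\dist_{\tilde H}(\tilde Y,\tilde Z)+2\epsilon$; letting $\epsilon\to0$ finishes. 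The same alignment shows the quotient projection $q\colon\K(\V)\to\K_{\Iso}(\V)$, $q(X)=\tilde X$, satisfies $q(B_{\dist_H}(X,r))=B_{\dist_{\tilde H}}(\tilde X,r)$: the inclusion $\subseteq$ is immediate from $\dist_{\tilde H}\le\dist_H$, while for $\supseteq$ one moves a near-optimal copy $B\sim Y$ onto a fixed copy of $X$ by a global isometry. Hence $q$ maps basic open balls to open balls and is therefore open; being also $1$-Lipschitz (hence continuous) and surjective, it induces the quotient topology (if $q^{-1}(U)$ is open then $U=q(q^{-1}(U))$ is open), so $\dist_{\tilde H}$ is compatible with it.

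For completeness I would pass to a subsequence with $\dist_{\tilde H}(\tilde X_n,\tilde X_{n+1})<2^{-n}$ and build representatives inductively: set $X_1'=X_1$, and given $X_n'$ pick $A\sim X_n$, $B\sim X_{n+1}$ with $\dist_H(A,B)<2^{-n}$, move $A$ onto $X_n'$ by a global isometry $\Phi$, and put $X_{n+1}'=\Phi(B)\sim X_{n+1}$, so that $\dist_H(X_n',X_{n+1}')=\dist_H(A,B)<2^{-n}$. Then $\{X_n'\}$ is Cauchy in $(\K(\V),\dist_H)$, which is complete by Proposition~\ref{propHyperCompleto}; so $X_n'\to X$ and $\dist_{\tilde H}(\tilde X_n,\tilde X)\le\dist_H(X_n',X)\to0$, and the full Cauchy sequence converges.

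The main obstacle is positivity: $\dist_{\tilde H}(\tilde X,\tilde Y)=0$ must force $X\sim Y$. Using ultrahomogeneity as above I would produce copies $B_n\sim Y$ with $\dist_H(X,B_n)\to0$. The near-coincidence furnishes correspondences between $X$ and each $B_n\cong Y$ whose distortion tends to $0$; fixing a countable dense set $\{x_i\}\subset X$, selecting matching points in $Y$, and extracting by compactness of $Y$ and a diagonal argument convergent subsequences $y_i^{(n)}\to y_i$, the assignment $x_i\mapsto y_i$ preserves distances and extends to a distance-preserving map $f\colon X\to Y$; the symmetric construction gives $g\colon Y\to X$. Then $g\circ f$ is a distance-preserving self-map of the compact space $X$, hence surjective (an isometric embedding of a compact metric space into itself cannot omit a point, by the standard argument that its forward orbit would otherwise be uniformly separated), so $g$, and hence $f$, is onto; thus $X\sim Y$. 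This Arzelà–Ascoli/diagonal extraction, which is exactly the classical statement that vanishing Gromov–Hausdorff distance between compact spaces implies isometry, is the only step that genuinely requires compactness rather than a formal use of ultrahomogeneity.
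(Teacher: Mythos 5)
Your proof is correct and complete. Bear in mind, however, that the paper offers no proof of this proposition: it is stated as a known result imported from \cites{BBI,Gro,Ed}, so there is no internal argument to compare yours against. The natural in-paper comparison is Proposition \ref{propPiAbierta}, the analogue for $\DS_{\Iso}(\V)$, and your argument closely parallels its proof: ultrahomogeneity is used in exactly the same three ways --- to align representatives for the triangle inequality, to prove the ball identity $\pi(B_r(f))=B_r(\tilde f)$ (your $q\bigl(B_{\dist_H}(X,r)\bigr)=B_{\dist_{\tilde H}}(\tilde X,r)$), which yields openness of the projection and compatibility of the metric with the quotient topology, and to glue a summable-distance subsequence into a genuine Cauchy sequence of representatives, settled by completeness of $(\K(\V),\dist_H)$ from Proposition \ref{propHyperCompleto}. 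The one genuine divergence is the positivity axiom: where the paper's Proposition \ref{propPiAbierta} dispatches the corresponding step for $\DGH$ with a one-line appeal to Ascoli's theorem, you spell out the classical argument in full --- distortion-vanishing correspondences, diagonal extraction over a countable dense subset, and surjectivity of distance-preserving self-maps of compact spaces. That expansion is precisely the theorem in \cite{BBI} that compact spaces at Gromov--Hausdorff distance zero are isometric, and it is the only step where compactness, rather than a formal use of ultrahomogeneity, does the work; your write-up has the merit of being self-contained and of making that dependence explicit.
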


It is interesting to remark how these metrics are related for $\V=\R^n,\Ur$. 
Denote by $\dist^{\R^n}$ and $\dist^\Ur$ the Euclidean metric and the metric of Urysohn universal space, respectively.
In \cite{Me08}*{Theorem 2} it is shown that 
\[
 \dist^{\Ur}_{\tilde H}(X,Y)\leq\dist^{\R^n}_{\tilde H}(X,Y)\leq c_n\sqrt{\dist^{\Ur}_{\tilde H}(X,Y)}
\]
for all $X,Y\in\K(\R^n)$. The constants $c_n$ depends only on the dimension of $\R^n$.
Naturally, to compute $\dist^{\Ur}_{\tilde H}(X,Y)$ we are taking isometric copies in $\Ur$.

\subsection{Isometric conjugacies}
\label{secIsoConj}
Let $(\V,\dist)$ be a metric space. 
We define an equivalence relation on $\DS(\V)$ as 
$f\sim g$ if there is an isometry $\varphi\colon M(f)\to M(g)$ such that 
$\varphi^{-1}  g  \varphi=f$. 
In this case we say that $\varphi$ is an \emph{isometric conjugacy}.

\begin{rmk}
 Suppose that $f\sim g$ with an isometry $\varphi\colon M(f)\to M(g)$. 
 Let $\psi\colon M(f)\times M(f)\to M(g)\times M(g)$ be the isometry $\psi(x,x')=(\varphi(x),\varphi(x'))$. 
 We will show that $\psi(f)=g$. 
 We know that $(x,y)\in f$ iff $(x,y)\in \varphi^{-1}  g  \varphi$, 
 that is $(\varphi(x),\varphi(y))\in g$, i.e. $\psi(x,y)\in g$.  
 This implies that $f$ and $g$ are isometric.
\end{rmk}

\begin{rmk}
\label{rmkGraficasChotas}
 Two maps $f,g\in\DS$ may have isometric graphs but they may not be conjugate. 
 In Figure \ref{figGraficas} we see an example. 
 Note that $f$ and $f^{-1}$ are isometric sets for all $f\in\DS(\V)$.
 \begin{figure}[h]
 \begin{center}
  \includegraphics{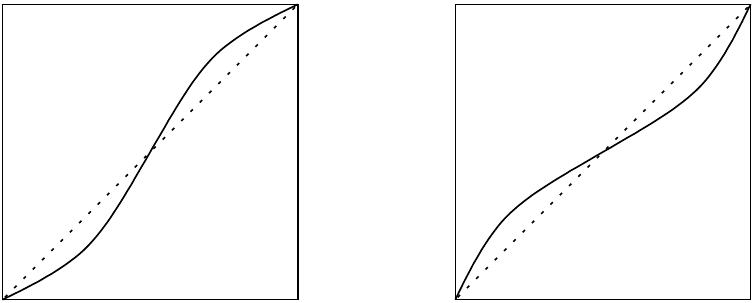}
  \caption{Two homeomorphisms $f$ and $f^{-1}$ of an interval. 
  On the left, there are 3 fixed points: a repeller and two attractors. 
  On the right: one attractor and two repellers. Thus, these homeomorphisms are not conjugate but their graphs are isometric.}
  \label{figGraficas}
 \end{center}
 \end{figure}
\end{rmk}

\subsection{The quotient space}
\label{secQuotSD}
Consider the quotient of $\DS(\V)$ by isometric conjugacies 
$$\DS_{\Iso}(\V)=\frac{\DS(\V)}{\sim}$$ with the quotient topology and 
denote by $\pi\colon \DS(\V)\to\DS_{\Iso}(\V)$ the canonical projection. 
Recall from \S \ref{secDS}
that $\D$ is the Hausdorff distance in $\K(\V^2)$.
Define 
\[
 \DGH(\tilde f,\tilde g)=\inf \D(f',g')
\]
where $\inf$ is taken over all $f'\sim f$ and $g'\sim g$.
It is clear that 
\begin{equation}
 \label{ecuDistOrto}
\D(f,g)\geq \DGH(\tilde f,\tilde g)
\end{equation}
for all $f,g\in\DS(\V)$.

\begin{prop}
\label{propPiAbierta}
 If $(\V,\dist)$ is ultrahomogeneous then: 
 \begin{enumerate}
  \item $\DGH$ is a metric compatible with the quotient topology,
  \item it holds that $\pi(B_r(f))=B_r(\tilde f)$ for all $r>0$, $f\in\DS(\V)$ and $\pi$ is an open map,
  \item If in addition $(\V,\dist)$ is complete then $(\DS_{\Iso}(\V),\DGH)$ is complete.
 \end{enumerate}
\end{prop}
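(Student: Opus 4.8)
The plan is to first verify that $\DGH$ is a pseudometric, then promote it to a genuine metric, establish the ball identity of item (2), and deduce the remaining assertions from it. Symmetry and $\DGH(\tilde f,\tilde f)=0$ are immediate from the definition, so the first real point is the triangle inequality. Given $\epsilon>0$ I would choose near-optimal representatives $f_1\sim f,\ g_1\sim g$ with $\D(f_1,g_1)<\DGH(\tilde f,\tilde g)+\epsilon$ and $g_2\sim g,\ h_2\sim h$ with $\D(g_2,h_2)<\DGH(\tilde g,\tilde h)+\epsilon$. The obstacle is that the two copies $g_1,g_2$ of $g$ need not coincide, and this is exactly where ultrahomogeneity enters: the isometric conjugacy $M(g_1)\to M(g_2)$ extends to a global isometry $\Phi$ of $\V$, so $\Psi=\Phi\times\Phi$ is an isometry of $\V^2$ for the max-metric sending $g_1$ to $g_2$ and preserving $\D$. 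Replacing $f_1$ by $\Psi(f_1)\sim f$ I may assume $g_1=g_2$, and the triangle inequality for $\D$ gives $\DGH(\tilde f,\tilde h)\leq\D(\Psi(f_1),h_2)<\DGH(\tilde f,\tilde g)+\DGH(\tilde g,\tilde h)+2\epsilon$; letting $\epsilon\to 0$ finishes this step.

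Next I would prove the ball identity $\pi(B_r(f))=B_r(\tilde f)$, which is the engine for everything else. The inclusion $\subseteq$ is immediate from \eqref{ecuDistOrto}. For $\supseteq$, if $\DGH(\tilde f,\tilde g)<r$ I pick $f'\sim f$ and $g'\sim g$ with $\D(f',g')<r$, extend the conjugacy $M(f)\to M(f')$ to a global isometry of $\V$, and let $\Psi$ be the induced isometry of $\V^2$ with $\Psi(f)=f'$. Then $g''=\Psi^{-1}(g')$ is isometrically conjugate to $g'$ (the conjugacy being induced by the ambient isometry $\Phi^{-1}$), hence $g''\sim g$, while $\D(f,g'')=\D(f',g')<r$; thus $\tilde g\in\pi(B_r(f))$. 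Once this identity holds, openness of $\pi$ follows since every open set is a union of balls, and compatibility with the quotient topology follows formally: $\pi^{-1}(B_r(\tilde f))$ is $\D$-open by the triangle inequality, and conversely if $\pi^{-1}(U)$ is open and $\tilde f\in U$ then some $B_\epsilon(f)\subseteq\pi^{-1}(U)$ gives $B_\epsilon(\tilde f)=\pi(B_\epsilon(f))\subseteq U$.

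The main obstacle is the separation axiom $\DGH(\tilde f,\tilde g)=0\Rightarrow\tilde f=\tilde g$, since here a genuine limiting argument is unavoidable. Using the ball identity I may fix the representative $f$ and produce $g_n\sim g$ with $\D(g_n,f)\to 0$. Writing each conjugacy as an isometric embedding $\iota_n\colon M(g)\to\V$ with $(\iota_n\times\iota_n)(g)=g_n$, I note that by Remark \ref{rmkDistsChotas} the compact sets $M(g_n)$ Hausdorff-converge to $M(f)$, so that $\overline{M(f)\cup\bigcup_n M(g_n)}$ is totally bounded; passing to the completion of $\V$ it becomes compact. As the $\iota_n$ are $1$-Lipschitz and land in this compact set, Arzelà--Ascoli yields a uniformly convergent subsequence $\iota_{n_k}\to\iota$, whose limit is an isometric embedding of $M(g)$. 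Uniform convergence gives $\iota(M(g))=\lim M(g_{n_k})=M(f)$ and $(\iota\times\iota)(g)=\lim g_{n_k}=f$; since $M(f)\subseteq\V$, the map $\iota$ is an isometric conjugacy inside $\V$, so $f\sim g$. Combined with the pseudometric properties, this shows $\DGH$ is a metric, completing item (1).

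Finally, for item (3) I would run the standard completion-of-a-quotient argument. Given a $\DGH$-Cauchy sequence I pass to a subsequence $\tilde f_k$ with $\DGH(\tilde f_k,\tilde f_{k+1})<2^{-k}$. Using the ball identity repeatedly I lift it to representatives $f_k$ with $\D(f_k,f_{k+1})<2^{-k}$, which is $\D$-Cauchy; by Proposition \ref{propDSComplete} it converges to some $f\in\DS(\V)$, whence $\DGH(\tilde f_k,\tilde f)\leq\D(f_k,f)\to 0$. Since a Cauchy sequence with a convergent subsequence converges, $(\DS_{\Iso}(\V),\DGH)$ is complete.
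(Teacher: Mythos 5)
Your proof is correct and follows essentially the same route as the paper's: the ball identity $\pi(B_r(f))=B_r(\tilde f)$ obtained by extending an isometric conjugacy to a global isometry of $\V$ (hence of $\V^2$), the formal deduction of openness and compatibility with the quotient topology from that identity, an Arzel\`a--Ascoli argument for the separation axiom (which the paper only cites), and completeness by lifting a rapidly Cauchy subsequence to a $\D$-Cauchy sequence of representatives via the ball identity. The only difference is that you spell out details the paper leaves implicit, notably the passage to the completion of $\V$ in the Ascoli step, a worthwhile precaution since item (1) does not assume $\V$ complete.
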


\begin{proof}
First we show that $\pi(B_r(f))=B_r(\tilde f)$. 
By \eqref{ecuDistOrto} we have that $\pi(B_r(f))\subset B_r(\tilde f).$ 
To prove the other inclusion suppose that $\DGH(\tilde f, \tilde g)<r$.
By definition, there are $f'\sim f$ and $g'\sim g$ such that $\D(f',g')<r$. 
Thus, there is an isometry $\varphi \colon M(f')\to M(f)$ conjugating $f$ and $f'$. 
As $\V$ is ultrahomogeneous, there is an isometry $\psi\colon\V\to\V$ with $\psi|_{M(f')}=\varphi$. 
Let $g''\in \DS(\V)$ be defined as 
 $g''=\{(\psi(x),\psi(y))\in\V^2:(x,y)\in g'\}$. 
Since $\psi$ is an isometry, we have that $\D(f,g'')=\D(f',g')<r$ and $g''\sim g$.
Therefore, $B_r(\tilde f)\subset \pi(B_r(f))$.

To prove that $\DGH$ is a metric, note that $\DGH$ is non-negative and symetric. 
The triangular inequality follows from the ultrahomogeneity of $\V$. 
If $\DGH(\tilde f,\tilde g)=0$ then by  
Ascoli Theorem \cite{Kelley}*{p. 234} we have that $f\sim g$.

By \eqref{ecuDistOrto} we have that $\pi$ is continuous with respect to $\DGH$, which implies that 
every open set $U\subset\DS_{\Iso}(\V)$ is open in the quotient topology. 
Conversely, suppose that $U$ is open in the quotient topology. 
Then, $\pi^{-1}(U)$ is open in $\DS(\V)$. 
For each $f\in \pi^{-1}(U)$ there is $r>0$ such that $B_r(f)\subset \pi^{-1}(U)$. 
Thus, $\pi(B_r(f))=B_r(\tilde f)\subset U$. This proves that $U$ is open with respect to $\DGH$. 

To prove that $\DS_{\Iso}(\V)$ is complete, let $\tilde f_n\in\DS_{\Iso}(\V)$ be a Cauchy sequence. 
Take a subsequence $\tilde f_{n_k}$ such that 
$\sum_{k=1}^\infty\DGH(\tilde f_{n_k},\tilde f_{n_{k+1}})<\infty$. 
Since $\V$ is ultrahomogeneous there is $g_k\in\DS(\V)$ such that 
$g_k\sim f_{n_k}$ and 
$\D(g_k,g_{k+1})<2\DGH(\tilde f_{n_k},\tilde f_{n_{k+1}})$ for all $k\in\N$. 
In this way, $g_k$ is a Cauchy sequence. 
Since $\V$ is complete, by Proposition \ref{propDSComplete} 
we have that $\DS(\V)$ is complete and $g_k$ is convergent with limit $g\in\DS(\V)$. 
Since $\pi$ is continuous, $\tilde g_k=\tilde f_{n_k}$ converges to 
$\tilde g$. 
As $\tilde f_n$ is a Cauchy sequence with a convergent subsequence, we conclude that 
it is convergent and the proof ends.
\end{proof}

\begin{thm}
\label{thmGHGen}
If $(\V,\dist)$ is Polish, perfect and ultrahomogeneous
then 
\[
 \DS^{SH}_{\Iso}(\V)=\{\tilde f\in \DS_{\Iso}(\V): f \text{ is conjugate to the Special Homeomorphism}\} 
\]
is a dense $G_\delta$ subset of $\DS_{\Iso}(\V)$.
\end{thm}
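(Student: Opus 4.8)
The plan is to apply Corollary \ref{corTopFubini} to the quotient projection $\pi\colon\DS(\V)\to\DS_{\Iso}(\V)$, taking as the dense $G_\delta$ set $R=\DS^{SH}(\V)$ furnished by Theorem \ref{thmGenDSSH} (applicable since $\V$ is Polish and perfect). Because $\tilde f\in\DS^{SH}_{\Iso}(\V)$ holds exactly when $f\in\DS^{SH}(\V)$, we have $\pi(R)=\DS^{SH}_{\Iso}(\V)$, so once the hypotheses of the corollary are verified the theorem follows at once.

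First I would check that both $X=\DS(\V)$ and $Y=\DS_{\Iso}(\V)$ are Polish. Since $\V$ is Polish, $\V^2$ is Polish and hence so is the hyperspace $(\K(\V^2),\D)$; as $\DS(\V)$ is closed in $\K(\V^2)$ (the argument used in Proposition \ref{propDSComplete}), it is Polish. For the quotient, completeness of $(\DS_{\Iso}(\V),\DGH)$ is Proposition \ref{propPiAbierta}(3), while separability is inherited from $\DS(\V)$ through the continuous surjection $\pi$; thus $Y$ is Polish. That $\pi$ is a continuous open surjection is exactly Proposition \ref{propPiAbierta}(2).

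The remaining hypothesis is the saturation condition $R=\pi^{-1}(\pi(R))$. The inclusion $R\subseteq\pi^{-1}(\pi(R))$ is automatic. For the converse, suppose $g\in\pi^{-1}(\pi(R))$, so that $\tilde g=\tilde f$ for some $f\in\DS^{SH}(\V)$; then $g$ is isometrically conjugate to $f$, hence in particular \emph{topologically} conjugate to $f$, and since $f$ is conjugate to the Special Homeomorphism so is $g$. Therefore $g\in\DS^{SH}(\V)=R$, i.e.\ $R$ is a union of isometric-conjugacy classes. With all hypotheses in hand, Corollary \ref{corTopFubini} yields that $\pi(R)=\DS^{SH}_{\Iso}(\V)$ is a dense $G_\delta$ subset of $\DS_{\Iso}(\V)$.

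Almost all the work has already been done: the substantive ingredient is the openness of $\pi$, which is isolated in Proposition \ref{propPiAbierta}, so I expect no serious obstacle here. The only genuinely new verification is that the target space $\DS_{\Iso}(\V)$ is Polish — its completeness being supplied by Proposition \ref{propPiAbierta}(3) and its separability by the continuity of $\pi$. The saturation step, although indispensable for the corollary, is immediate from the fact that isometric conjugacy is a special case of topological conjugacy, which makes the property defining $\DS^{SH}(\V)$ invariant along the fibers of $\pi$.
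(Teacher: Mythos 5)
Your proposal is correct and takes essentially the same route as the paper: apply Corollary \ref{corTopFubini} to the quotient map $\pi\colon\DS(\V)\to\DS_{\Iso}(\V)$ with $R=\DS^{SH}(\V)$, using Proposition \ref{propPiAbierta} for openness and Theorem \ref{thmGenDSSH} for the dense $G_\delta$ input. The paper's proof is terser and leaves the Polishness of both spaces and the saturation identity $\pi^{-1}(\pi(R))=R$ unverified; the checks you supply for these (separability of the quotient via continuity of $\pi$, completeness via Proposition \ref{propPiAbierta}(3), and saturation because isometric conjugacy is a special case of topological conjugacy) are exactly right.
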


\begin{proof}
The projection $\pi\colon\DS(\V)\to\DS_{\Iso}(\V)$ is continuous, surjective and open (Proposition \ref{propPiAbierta}).
By Theorem \ref{thmGenDSSH} we have 
that 
$\DS^{SH}(\V)$ is a dense $G_\delta$ subset of $\DS(\V)$.
Note that $\pi(\DS^{SH}(\V))=\DS^{SH}_{\Iso}(\V)$ and $\pi^{-1}(\pi(\DS^{SH}(\V)))=\DS^{SH}(\V)$.
Therefore, the result follows by Corollary \ref{corTopFubini}.
\end{proof}

\subsection{Arbieto-Morales metric}
\label{secAM}
Given two metric spaces $(X,\dist^X)$ and $(Y,\dist^Y)$ a function 
$\varphi\colon X\to Y$ is an $\epsilon$-\emph{isometry} if 
\[
 |\dist^Y(\varphi(x),\varphi(x'))-\dist^X(x,x')|<\epsilon
\]
for all $x,x'\in X$ and $\dist^Y_H(\varphi(X),Y)<\epsilon$.
Let $\Iso_\epsilon(X,Y)$ be the set of $\epsilon$-isometries from $X$ to $Y$.
The $C^0$-\emph{Gromov-Hausdorff distance} \cite{AM} 
between the maps $f\colon X\to X$ and $g\colon Y\to Y$ is defined as 
\begin{equation}
 \label{eqDefAM}
\begin{array}{rl}
 \DAM(f,g)=& \inf\{\epsilon>0: 
 \exists \varphi\in\Iso_\epsilon(X,Y)
 \text{ and }
 \psi\in\Iso_\epsilon(Y,X)
 \text{ s.t.}\\
 & \dist_{C^0}(g  \varphi,\varphi  f)<\epsilon\text{ and }
 \dist_{C^0}(\psi  g,f  \psi)<\epsilon\}.
\end{array}
\end{equation}
Recall $\dist_{C^0}$ from \eqref{ecuDistC0}. 
Note that $\DAM$ is defined for maps and not for arbitrary relations. 

\begin{rmk}[\cite{AM}*{Theorem 1}]
\label{rmkAMmetrica}
 The $C^0$-Gromov-Hausdorff distance satisfies the following properties: 
 \begin{enumerate}
  \item $\DAM(f,g)=0$ if and only if $f\sim g$, 
  \item $\DAM(f,g)=\DAM(g,f)$ and 
  \item $\DAM(f,g)\leq 2(\DAM(f,h)+\DAM(h,g))$,
 \end{enumerate}
 for all continuous maps $f,g,h$ of compact metric spaces. 
 In \cite{AM} a function as $\DAM$ satisfying these conditions is called pseudo quasi-distance with coefficient 2 (in the 
 triangular inequality). 
 Notice that $\DAM$ is invariant by isometric conjugacies, i.e., if $f\sim f'$ and $g\sim g'$ then 
 $\DAM(f,g)=\DAM(f',g')$.
\end{rmk}

\begin{lem}[\cite{BBI}*{Corollary 7.3.28}]
\label{lemExtMet}
 If $\varphi\colon X\to Y$ is an $\epsilon$-isometry 
 then there is a metric $\dist^Z$ in the disjoint union $Z=X\cup Y$ such that 
 $\dist^Z(x,x')=\dist_X(x,x')$ for all $x,x'\in X$, 
 $\dist^Z(y,y')=\dist_Y(y,y')$ for all $y,y'\in Y$, 
 $\dist^Z_H(X,Y)\leq 2\epsilon$ and 
 $\dist(x,\varphi(x))=\epsilon$ for all $x\in X$.
\end{lem}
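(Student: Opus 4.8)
The plan is to build $\dist^Z$ by gluing $X$ and $Y$ along $\varphi$ using ``bridges'' of fixed length $\epsilon$. Concretely, I keep $\dist^Z|_{X\times X}=\dist_X$ and $\dist^Z|_{Y\times Y}=\dist_Y$, declare $\dist^Z$ symmetric, and for $x\in X$, $y\in Y$ set
\[
 \dist^Z(x,y)=\inf_{x'\in X}\bigl(\dist_X(x,x')+\epsilon+\dist_Y(\varphi(x'),y)\bigr).
\]
This is the shortest-path length from $x$ to $y$ when the only permitted crossings are the pairs $(x',\varphi(x'))$, each charged $\epsilon$. With this definition two of the four conclusions are immediate. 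Taking $x'=x$ gives $\dist^Z(x,\varphi(x))\le\epsilon$, while every summand in the infimum is $\ge\epsilon$ because the two distance terms are nonnegative; hence $\dist^Z(x,\varphi(x))=\epsilon$. The same lower bound gives $\dist^Z(x,y)\ge\epsilon>0$ on all cross pairs, so $\dist^Z$ will separate points. For the Hausdorff bound, each $x\in X$ is at $\dist^Z$-distance $\epsilon$ from $\varphi(x)\in Y$, and for $y\in Y$ the hypothesis $\dist^Y_H(\varphi(X),Y)<\epsilon$ supplies $x'$ with $\dist_Y(\varphi(x'),y)<\epsilon$, whence $\dist^Z(y,x')<2\epsilon$; therefore $\dist^Z_H(X,Y)\le 2\epsilon$.

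What remains, and where the real work lies, is to verify that $\dist^Z$ is a genuine metric and that my declaration of the restrictions is consistent, i.e.\ that no path through $Y$ undercuts the honest $X$-distance (and symmetrically). The triangle inequalities involving at most one crossing — the sign patterns $X$--$X$--$Y$ and $X$--$Y$--$Y$ — fall out routinely from the infimum structure together with the triangle inequalities of $\dist_X$ and $\dist_Y$. The delicate cases are the two-crossing patterns $X$--$Y$--$X$ (equivalently, the ``no shortcut'' statement ensuring $\dist^Z|_{X\times X}=\dist_X$) and its mirror $Y$--$X$--$Y$.

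For $X$--$Y$--$X$, with $x,x''\in X$, $y\in Y$, and arbitrary crossing points $x_1',x_2'\in X$, I bound the contribution
\[
 \dist_X(x,x_1')+\epsilon+\dist_Y(\varphi(x_1'),y)+\dist_Y(y,\varphi(x_2'))+\epsilon+\dist_X(x_2',x'')
\]
from below by first collapsing the two $Y$-terms through the triangle inequality in $Y$ into $\dist_Y(\varphi(x_1'),\varphi(x_2'))$, then applying the distortion estimate $\dist_Y(\varphi(x_1'),\varphi(x_2'))\ge\dist_X(x_1',x_2')-\epsilon$ of an $\epsilon$-isometry, and finally the triangle inequality in $X$. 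The two bridge $\epsilon$'s exactly absorb the $-\epsilon$ loss from the distortion, leaving the lower bound $\dist_X(x,x'')+\epsilon\ge\dist_X(x,x'')$. I expect the main obstacle to be precisely this bookkeeping: the $\epsilon$-isometry hypothesis and the chosen bridge length $\epsilon$ must be played off against each other so that the net slack stays nonnegative. The $Y$--$X$--$Y$ case is identical after interchanging the roles of $X$ and $Y$, yielding $\dist^Z\ge\dist_Y$ on $Y\times Y$ and completing the proof that $\dist^Z$ is a metric restricting to $\dist_X$ and $\dist_Y$.
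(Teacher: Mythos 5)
Your proof is correct. There is, however, no line-by-line comparison to make: the paper offers no argument for this lemma at all, quoting it directly from \cite{BBI}*{Corollary 7.3.28}. What you have done is reconstruct, in self-contained form, essentially the argument underlying the cited result. In \cite{BBI} the statement is obtained by passing to the correspondence $R=\{(x,y):\dist_Y(\varphi(x),y)\le\epsilon\}$ and invoking their Theorem 7.3.25, whose proof is exactly the gluing construction you use: a bridge of fixed length (there, half the distortion of $R$) between matched points, with the triangle inequality checked case by case. Your variant glues along the graph of $\varphi$ with bridge length exactly $\epsilon$, which is what produces the precise normalization $\dist^Z(x,\varphi(x))=\epsilon$ demanded by the lemma as stated in the paper, rather than the constants that come out of the correspondence route. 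Your treatment of the two delicate cases is complete and correct: in the $X$--$Y$--$X$ pattern the distortion bound $\dist_Y(\varphi(x_1'),\varphi(x_2'))\ge\dist_X(x_1',x_2')-\epsilon$ loses at most $\epsilon$, which the two bridges (total $2\epsilon$) absorb, leaving the slack $+\epsilon$; the $Y$--$X$--$Y$ case is symmetric, and the one-crossing cases follow from the infimum structure as you say. The remaining conclusions also come out right: cross distances are bounded below by $\epsilon$, so the formula separates points; taking $x'=x$ gives $\dist^Z(x,\varphi(x))=\epsilon$ exactly; and the hypothesis $\dist_H^Y(\varphi(X),Y)<\epsilon$ in the definition of $\epsilon$-isometry supplies, for each $y\in Y$, a point of $X$ within $2\epsilon$, giving $\dist_H^Z(X,Y)\le 2\epsilon$. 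The only thing your write-up leaves implicit is that the strict inequality in the paper's definition of $\epsilon$-isometry makes the distortion estimate strict, but since you only need the non-strict version, this costs nothing.
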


Since in \cite{AM} the authors consider dynamical systems on all the compact metric spaces, 
the next proof is only given for $\V=\Ur$.

\begin{thm}
\label{thmCompAm}
The restriction of $\DGH$ to continuous maps in $\DS_{\Iso}(\Ur)$ is 
topologically equivalent to $\DAM$.
\end{thm}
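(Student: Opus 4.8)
The plan is to prove that the identity map on the set of (conjugacy classes of) continuous dynamical systems is a homeomorphism between the topology of $\DGH$ and the topology generated by the balls of $\DAM$. Since $\DGH$ descends from $\D$ to the quotient and, by Remark \ref{rmkAMmetrica}, $\DAM$ is invariant under isometric conjugacy, both quantities are well defined on classes $\tilde f$, and it is enough to show that around every continuous $\tilde f$ each $\DGH$-ball contains a $\DAM$-ball and conversely. For a continuous map $f$ on a compact set write $\omega_f(t)=\sup\{\dist(f(x),f(x')):\dist(x,x')\le t\}$ for its modulus of continuity, which is finite and tends to $0$ with $t$ by uniform continuity. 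The crux of the whole argument is an equicontinuity estimate: smallness of either distance between $f$ and $g$ forces $\omega_g$ to be controlled by $\omega_f$ (plus a term going to $0$), so that all the comparisons below are uniform over a whole ball and not merely along fixed sequences.

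I would first record this estimate. If $\D(f,g)<\delta$ for continuous maps on $X=M(f)$, $Y=M(g)\subset\Ur$, then for $y,y'\in Y$ with $\dist(y,y')\le t$ I pick points of $f$ within $\delta$ of $(y,g(y))$ and $(y',g(y'))$; the three-term estimate, comparing the two base points through $f$, gives $\omega_g(t)\le 2\delta+\omega_f(t+2\delta)$. Analogously, if $\DAM(f,g)<\epsilon$ with witnessing $\epsilon$-isometries $\varphi\in\Iso_\epsilon(X,Y)$ and $\psi\in\Iso_\epsilon(Y,X)$, then using $\psi$, namely its distortion bound together with $\dist_{C^0}(\psi g,f\psi)<\epsilon$ and $\omega_f$, yields $\omega_g(t)\le 3\epsilon+\omega_f(t+\epsilon)$. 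In both cases $\omega_g$ is dominated by a modulus depending only on $f$ and the small parameter.

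For the inclusion of a $\DAM$-ball into a $\DGH$-ball, suppose $\DAM(f,g)<\epsilon$, with $\varphi,\psi$ as above. Applying Lemma \ref{lemExtMet} to $\varphi$ produces a metric on $Z=X\sqcup Y$ with $\dist^Z(x,\varphi(x))=\epsilon$ and $\dist^Z_H(X,Y)\le 2\epsilon$; since $\Ur$ is universal, $Z$ embeds isometrically into $\Ur$, carrying $f,g$ to isometric copies $\hat f,\hat g$. For $(x,f(x))\in\hat f$ the point $(\varphi(x),g\varphi(x))\in\hat g$ is within $\max\{\epsilon,\epsilon+\dist^Y(\varphi f(x),g\varphi(x))\}<2\epsilon$, using $\dist_{C^0}(\varphi f,g\varphi)<\epsilon$; for $(y,g(y))\in\hat g$ the $\epsilon$-density of $\varphi(X)$ in $Y$ lets me pick $x$ with $\dist^Y(\varphi(x),y)<\epsilon$, and then $(x,f(x))\in\hat f$ lies within $2\epsilon+\omega_g(\epsilon)$. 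Hence $\D(\hat f,\hat g)\le 2\epsilon+\omega_g(\epsilon)$, and by the equicontinuity estimate $\omega_g(\epsilon)\le 3\epsilon+\omega_f(2\epsilon)$, so $\DGH(\tilde f,\tilde g)\le \D(\hat f,\hat g)\le 5\epsilon+\omega_f(2\epsilon)$, a bound depending only on $f$ and tending to $0$ with $\epsilon$.

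Finally, for the reverse inclusion, suppose $\DGH(\tilde f,\tilde g)<\delta$ and choose copies $f'\sim f$, $g'\sim g$ with $\D(f',g')<\delta$; by conjugacy invariance of $\DAM$ I may compute with $f',g'$, whose moduli equal $\omega_f,\omega_g$. By Remark \ref{rmkDistsChotas}, $\dist_H(M(f'),M(g'))<\delta$, so a nearest-point choice defines $\varphi\colon M(f')\to M(g')$ with $\dist(x,\varphi(x))<\delta$; this $\varphi$ is a $2\delta$-isometry whose image is $2\delta$-dense, and the defect $\dist_{C^0}(g'\varphi,\varphi f')$ is bounded by $2\delta+\omega_{g'}(2\delta)$ by matching $\varphi(x)$ with a point of $g'$ close to $(x,f'(x))$ and invoking $\omega_{g'}$. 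A symmetric choice handles $\psi$, giving $\DAM(f,g)=\DAM(f',g')\le 2\delta+\omega_{g'}(2\delta)\le 4\delta+\omega_f(4\delta)$, again uniform in $g$. Choosing the parameters small enough makes each ball sit inside the prescribed ball of the other topology, which establishes the equivalence. The main obstacle, and the step I would be most careful about, is precisely the uniform modulus control of the second paragraph: without it the comparison constants would depend on the varying $g$ and one could only match convergent sequences rather than balls.
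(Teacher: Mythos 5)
Your proof is correct, and its skeleton matches the paper's: both directions reduce to ball containments around a fixed continuous center, the passage from $\DAM$-closeness to $\DGH$-closeness goes through Lemma \ref{lemExtMet} plus an isometric embedding of the glued space into $\Ur$, and the converse direction builds explicit $\epsilon$-isometries out of the proximity of the graphs. The genuine difference is the role you assign to the modulus of continuity. What you call the crux --- the equicontinuity estimates transferring $\omega_g$ to $\omega_f$ --- is entirely absent from the paper, and for a reason: the paper arranges both estimates so that no modulus of a \emph{varying} map ever appears. In the direction $\DGH\Rightarrow\DAM$, instead of your nearest-point map (which controls only the first coordinate and hence forces you to pay $\omega_{g'}(2\delta)$ in the conjugacy defect), the paper uses Remark \ref{rmkDistDS} to choose $\varphi(x)=y$ with \emph{both} $\dist(x,y)<\delta$ and $\dist(f(x),g(y))<\delta$; the defect is then bounded by
\[
\dist\bigl(g(\varphi(x)),\varphi(f(x))\bigr)\le \dist\bigl(g(\varphi(x)),f(x)\bigr)+\dist\bigl(f(x),\varphi(f(x))\bigr)<2\delta,
\]
with no continuity at all, which upgrades your ball containment to the uniform inequality $\DAM(f,g)\le 2\DGH(\tilde f,\tilde g)$. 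In the direction $\DAM\Rightarrow\DGH$, the paper simply centers the balls at $g$, so the only modulus needed is that of the fixed center, again making any transfer lemma unnecessary. So both arguments are valid; yours buys the (correct and mildly interesting) observation that closeness in either distance forces modulus control of the perturbed map, at the cost of extra machinery, while the paper's sharper choice of comparison points buys a cleaner and stronger statement in one direction and a shorter proof in the other. One small point of care in your write-up: your final bound in the $\DGH\Rightarrow\DAM$ direction should formally be the maximum of the defects coming from $\varphi$ and from $\psi$, i.e.\ $\max\{2\delta+\omega_{g'}(2\delta),\,2\delta+\omega_{f'}(2\delta)\}$, though after your transfer estimate both terms are indeed absorbed into $4\delta+\omega_f(4\delta)$, so the conclusion stands.
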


\begin{proof}
First we will show that 
\begin{equation}
 \label{ecuDAMDGH}
\DAM(f,g)\leq 2\DGH(\tilde f,\tilde g)
\end{equation}
Suppose that $\DGH(\tilde f,\tilde g)<\delta$. 
Taking isometric conjugacies we can assume that $\D(f,g)<\delta$. 
For each $x\in X=M(f)$ there is $y\in Y=M(g)$ such that 
$\dist(x,y)<\delta$ and $\dist(f(x),g(y))<\delta$. 
This defines a function $\varphi \colon X\to Y$ as $\varphi (x)=y$ 
satisfying $\dist(x,\varphi (x))<\delta$ and $\dist(f(x),g(\varphi (x)))<\delta$ for all 
$x\in X$. 

We will show that $\varphi \in\Iso_{2\delta}(X,Y)$.
Since $\D(f,g)<\delta$, given $y\in Y$ there is $x\in X$ such that 
$\dist(x,y)<\delta$ and $\dist(f(x),g(y))<\delta$. 
Then $$\dist(y,\varphi (x))\leq \dist(y,x)+\dist(x,\varphi (x))<\delta+\delta=2\delta.$$
This proves that $\dist_H(\varphi (X),Y)<2\delta$. 
Given $x,x'\in X$ note that 
\[
|\dist(\varphi (x),\varphi (x'))-\dist(x,x')|\leq \dist(\varphi (x),x)+\dist(x',\varphi (x'))<2\delta.
\]
Thus, $\varphi \in\Iso_{2\delta}(X,Y)$.
Given $x\in X$ we have that
\[
 \dist(g(\varphi (x)),\varphi (f(x)))\leq \dist(g(\varphi (x)),f(x))+\dist(f(x),\varphi (f(x)))<2\delta.
\]
This proves that $\dist_{C^0}(g  \varphi ,\varphi   f)<2\delta$. 
In a similar way we can define the required $\psi\in\Iso_{2\delta}(Y,X)$. 
This proves \eqref{ecuDAMDGH}.

Given a map $g\in\DS(\Ur)$ and $\epsilon>0$ take 
$\delta\in (0,\epsilon/3)$ such that if $u,v\in M(g)$ and $\dist(u,v)<\delta$ then $\dist(g(u),g(v))<\epsilon/3$.
Suppose that $\DAM(f,g)<\delta$.
By definition, there are
$\varphi\in\Iso_\delta(X,Y)$
and
$\psi\in\Iso_\delta(Y,X)$
such that $\dist_{C^0}(g  \varphi,\varphi  f)<\delta$ and 
 $\dist_{C^0}(\psi  g,f  \psi)<\delta$.
By Lemma \ref{lemExtMet}, we can (in addition) assume that 
$\dist(x,\varphi(x))<\delta$
for all $x\in X$.
Then
\[
\dist(f(x),g(\varphi(x)))\leq\dist(f(x),\varphi(f(x)))+\dist(\varphi(f(x)),g(\varphi(x)))
 <\delta+\delta=2\delta.
\]
Since $\dist^Y_H(\varphi(M(f),M(g))<\delta$, 
for all $y\in Y$ there is $x\in X$ such that $\dist(y,\varphi(x))<\delta$. 
Then
\[
 \dist(x,y)\leq\dist(x,\varphi(x))+\dist(\varphi(x),y)<\delta+\delta=2\delta.
\]
Also
\begin{align*}
 \dist(f(x),g(y)) \leq &\dist(f(x),\varphi(f(x)))+\dist(\varphi(f(x)),g(\varphi(x)))\\
  &+\dist(g(\varphi(x)),g(y))\\
 \leq& \delta+\delta+\epsilon/3<\epsilon.
\end{align*}
As explained in Remark \ref{rmkDistDS}, this proves that $\D(f,g)<\epsilon$.
\end{proof}

\begin{bibdiv}
\begin{biblist}
\bib{Akin}{book}{
author={E. Akin},
title={The general topology of dynamical systems},
year={1993},
publisher={American Mathematical Society}}

\bib{Akin2004}{article}{
title={Lectures on Cantor and Mycielski Sets for Dynamical Systems},
author={E. Akin},
journal={Contemporary Mathematics},
volume={356}, 
year={2004},
pages={21--79}}

\bib{Akin2016}{article}{
title={Conjugacy in the Cantor set automorphism group},
author={E. Akin},
journal={Contemporary Mathematics},
volume={678}, 
year={2016},
pages={1--42}}

\bib{AGW}{article}{
author={E. Akin}, 
author={E. Glasner}, 
author={B. Weiss},
title={Generically there is but one self homeomorphism of the Cantor space},
journal={Trans. Amer. Math. Soc.},
volume={360},
year={2008},
pages={3613--3630}}

\bib{AM}{article}{
author={A. Arbieto},
author={C.A. Morales},
title={Topological stability from Gromov-Hausdorff viewpoint},
journal={Discrete and Continuous Dynamical Systems},
volume={37},
number={7},
pages={3531-3544},
year={2017}}

\bib{BeDa}{article}{
title={Graph theoretic structure of maps of the Cantor space},
author={N.C. Bernardes Jr.},
author={U.B. Darji},
journal={Advances in Mathematics},
volume={231},
year={2012},
pages={1655--1680}}

\bib{BBI}{book}{
author={D. Burago}, 
author={Y. Burago}, 
author={S. Ivanov},
title={A Course in Metric Geometry}, 
series={Graduate Studies in Mathematics}, 
volume={33},
publisher={American Mathematical Society}, 
year={2001}}


\bib{Ed}{article}{
author={D.A. Edwards},
title={The Structure of Superspace},
booktitle={Studies in Topology, Academic Press}, 
year={1975}} 



\bib{GlWe}{article}{
author={E. Glasner},
author={B. Weiss},
title={The topological Rohlin property and topological entropy},
journal={American Journal of Mathematics}, 
volume={123}, 
year={2001}, 
pages={1055--1070}}

\bib{Gro}{book}{
author={M. Gromov},
title={Metric Structures for Riemannian and Non-Riemannian Spaces},
year={2007},
publisher={Birkh\"auser Boston},
note={Reprint of the 2001 Edition}}

\bib{Heinonen}{book}{
author={J. Heinonen},
title={Geometric embeddings of metric spaces},
series={Technical report}, 
publisher={University of Jyv\"askyl\"a}, 
volume={90},
place={Finland}, 
year={2003}}

\bib{Ho}{article}{
author={M. Hochman}, 
title={Genericity in topological dynamics}, 
journal={Ergodic Theory Dynam. Systems},
volume={28},
year={2008},
pages={125--165}} 

\bib{Hol}{article}{
author={M.R. Holmes},
title={The universal separable metric space of Urysohn and isometric embeddings thereof in Banach spaces},
journal={Fundamenta Mathematicae},
volume={140},
year={1992},
pages={199--223}}

\bib{Huhu}{article}{
author={G.E. Huhunaishvili},
title={A property of the universal metric space of Urysohn}, 
journal={Dokl. Akad. Nauk SSSR},
volume={101},
year={1955}, 
pages={607--610},
language={in Russian}}

\bib{IN}{book}{
author={A. Illanes},
author={S.B. Nadler Jr.},
title={Hyperspaces},
publisher={Marcel Dekker},
year={1999}}



\bib{KeRo}{article}{
author={A.S. Kechris},
author={C. Rosendal},
title={Turbulence, amalgamation, and generic automorphisms of homogeneous structures},
journal={Proc. Lond. Math. Soc.},
volume={94},
year={2007},
pages={302--350}}

\bib{Kelley}{book}{
author={J.L. Kelley},
title={General Topology},
publisher={Springer-Verlag},
series={Graduate Texts in Mathematics},
volume={27},
year={1955}}


\bib{Kimura}{article}{
author={T. Kimura},
title={Homeomorphisms of zero-dimensional spaces},
journal={Tsukuba J. Math.},
volume={12},
year={1988},
pages={489--495}}


\bib{Me08}{article}{
title={Gromov–Hausdorff distances in Euclidean spaces},
author={F. M\'emoli},
conference={
  title={IEEE Computer Society Conference on Computer Vision and Pattern Recognition Workshops},
  date={2008}},
pages={1--8}}

\bib{Nguyen}{book}{
author={L. Nguyen Van Th\'e}, 
title={Structural Ramsey Theory of Metric Spaces and Topological Dynamics of Isometry Groups},
publisher={Amer. Math. Soc.},
series={Memoirs of the AMS},
volume={968},
year={2010}}

\bib{Oxtobi}{book}{
author={J.C. Oxtoby}, 
title={Measure and Category}, 
edition={2}, 
series={Grad. Texts Math.}, 
volume={2}, 
publisher={Springer, New York}, 
year={1980}}

\bib{Rouyer}{article}{
author={J. Rouyer},
title={Generic properties of compact metric spaces},
journal={Topology and its Applications},
year={2011},
volume={158},
pages={2140--2147}}

\bib{Sears}{article}{
author={M. Sears},
title={Expansive self-homeomorphisms of the Cantor space},
journal={Math. Systems Theory},
volume={6},
year={1972},
pages={129--132}}

\bib{Sh89}{article}{
author={T. Shimomura},
title={The pseudo-orbit tracing property and expansiveness on the Cantor set},
journal={Proc. of the Amer. Math. Soc.},
volume={106},
year={1989},
pages={241--244}}

\bib{Sh14}{article}{
author={T. Shimomura},
title={Special homeomorphisms and approximation for Cantor systems},
journal={Topol. Appl.},
volume={161},
year={2014}, 
pages={178--195}}

\bib{Tuz}{article}{	
title={Who Invented the Gromov-Hausdorff Distance?},
author={A.A. Tuzhilin},
journal={eprint arXiv:1612.00728},
year={2016}}

\bib{Wie}{article}{
title={The convex hull of a typical compact set},
author={J.A. Wieacker}, 
journal={Math. Ann.}, 
volume={282},
year={1988},
pages={637--644}}
\end{biblist}
\end{bibdiv}

\end{document}